\theoremstyle{definition}
\newtheorem{thm}{Theorem}[section]
\newtheorem{lem}[thm]{Lemma}
\newtheorem{th-def}[thm]{Theorem-Definition}
\newtheorem{cor}[thm]{Corollary}
\newtheorem{defn-lem}[thm]{Definition-Lemma}
\newtheorem{ex}[thm]{Example}
\newtheorem{prop}[thm]{Proposition}
\newtheorem{rem}[thm]{Remark}
\newtheorem{defn}[thm]{Definition}
\numberwithin{equation}{section}
\def \Q{{\mathbb Q}}
\def \Z{{\mathbb Z}}
\def \P{\mathbb P}
\def\map#1.#2.{#1 \longrightarrow #2}
\def\rmap#1.#2.{#1 \dasharrow #2}
\DeclareMathOperator{\Alb}{Alb}
\DeclareMathOperator{\Iso}{Iso}
\DeclareMathOperator{\Spec}{Spec}
\DeclareMathOperator{\Jac}{Jac}
\DeclareMathOperator{\gen}{gen}
\DeclareMathOperator{\Stab}{Stab}
\DeclareMathOperator{\Aut}{Aut}
\def\fb#1.{\underset #1 \to \times}
\def\pr#1.{\Bbb P^{#1}}
\def\ring#1.{\mathcal O_{#1}}
\def\mlist#1.#2.{{#1}_1,{#1}_2,\dots,{#1}_{#2}}
\def\uloopr#1{\ar@'{@+{[0,0]+(-4,5)} @+{[0,0]+(0,10)}
@+{[0,0]+(4,5)}}
  ^{#1}}
\def\dloopr#1{\ar@'{@+{[0,0]+(-4,-5)} @+{[0,0]+(0,-10)}
@+{[0,0]+(4,-5)}}
  _{#1}}
\def\rloopd#1{\ar@'{@+{[0,0]+(5,4)} @+{[0,0]+(10,0)}
@+{[0,0]+(5,-4)}}
  ^{#1}}
\def\lloopd#1{\ar@'{@+{[0,0]+(-5,4)} @+{[0,0]+(-10,0)}
@+{[0,0]+(-5,-4)}}
  _{#1}}
\long\def\ignore#1{}
\long\def\ignore#1{#1}
\title{The ordinarity of an isotrivial elliptic fibration}
\author{}
\date{}
\begin{document}
\maketitle
\begin{center}
{\Large Junmyeong Jang} \footnote{\noindent J.Jang : School of
Mathematics, Korea Institute for Advanced Study, Hoegiro 87,
Dongdaemun-gu, Seoul
130-722, Korea jjm@kias.re.kr\\
Mathematics Subject Classification : 11G25, 14J20}
\end{center}

\medskip
     \noindent
     {\bf Abstract} :
     In this paper, we study the ordinarity of an isotrivial
     elliptic surface defined over a field of positive
     characteristic. If an isotrivial elliptic fibration $\pi:X
     \to C$ is given, $X$ is ordinary when the common fiber of
     $\pi$ is ordinary and a certain finite cover of the base $C$
     is ordinary. By this result, we may obtain the ordinary
     reduction theorem for some kinds of isotrivial elliptic
     fibrations.\\
     \rule{14.5cm}{0.3mm}
     \section{Introduction}
     Let $k$ be an algebraically closed field of characteristic
     $p>0$ and $A$ be an abelian variety of $d$-dimensional over
     $k$. The number of $p$-torsion points of $A(k)$ is given by
     $p^{\gamma}(0\leq \gamma \leq d)$ and $\gamma$ is called the
     $p$-rank of $A$. By definition, $A$ is ordinary if $\gamma =d$.
     There are several equivalent conditions for $A$ to be ordinary.
     Let $F_{A}$ be the absolute Frobenius morphism of $A$. $A$ is
     ordinary if and only if $F_{X} ^{*} : H^{1}(\mathcal{O}_{A})
     \to H^{1} (\mathcal{O}_{A})$ is bijective. Also, $A$ is
     ordinary if and only if the Frobenius morphism on
     $H^{1}(W\mathcal{O}_{A})$ is bijective, here
     $W\mathcal{O}_{A}$ is the sheaf of the Witt vectors of
     $\mathcal{O}_{A}$. (\cite{I},p.651) The ordinarity of $A$ is
     equivalent to that the Newton polygon of $A$ and the Hodge
     polygon of $A$ coincide at every level. The definition of
     ordinarity can be generalized to an arbitrary proper smooth
     variety over $k$. Let us recall the definition of an ordinary variety. (\cite{IR})
     Let $X$ be a proper smooth variety of $d$-dimensional over $k$ and $F_{X}$ be
     the absolute Frobenius morphism on $X$. Let
     $$ 0 \to \mathcal{O}_{X} \overset{d}\to \Omega _{X/k}^{1} \to \cdots \overset{d}\to
     \Omega _{X/k} ^{d} \to 0$$
     be the deRham complex of $X/k$. Consider the push forward of
     the deRham complex by $F_{X}$
     $$ 0 \to F_{X*}\mathcal{O}_{X} \overset{d}\to F_{X*}\Omega _{X/k}^{1} \to \cdots \overset{d}\to
     F_{X*}\Omega _{X/k} ^{d} \to 0.$$
     In this complex, the exterior differential $d$ is
     $\mathcal{O}_{X}$-linear, so the kernel and the image of $d$ are
     coherent $\mathcal{O}_{X}$-modules. In fact, they are vector bundles. We denote the kernel and
     image of $d : F_{X*}\Omega_{X/k}^{i-1} \to
     F_{X*}\Omega_{X/k}^{i}$ by $Z\Omega^{i-1}_{X/k}$ and $B\Omega
     ^{i} _{X/k}$ respectively.
     \begin{defn}
     $X$ is (Bloch-Kato-Illusie-Raynaud) ordinary if $H^{i}(B\Omega _{X/k}^{j})=0$ for all $i$
     and $j$.
     \end{defn}
     When the crystalline cohomology of $X/k$,
     $H^{i}_{cris}(X/W(k))$ is torsion free for each $i$, $X$ is
     ordinary if and only if the Hodge polygon and the Newton
     polygon coincide at every level. (\cite{IR} p.209)  Since $Z\Omega _{X/k}^{0}
     \subset F_{X*}\mathcal{O}_{X}$ is equal to the image of
     $F_{X}^{*} : \mathcal{O}_{X} \hookrightarrow
     F_{X*}\mathcal{O}_{X}$, there is an exact sequence
     $$0 \longrightarrow \mathcal{O}_{X} \overset{F_{X}^{*}}{\longrightarrow}
     F_{X*}\mathcal{O}_{X} \longrightarrow B\Omega ^{1} \longrightarrow 0.$$
     If $X$ is ordinary, $H^{i}(B\Omega^{1})=0$ for each $i$, so the Frobenius morphism on
     $H^{i}(\mathcal{O}_{X})$ is bijective for each $i$.
     When $X$ is an abelian variety or a curve, the converse also holds.

     Let $F$ be a number field and $E$ be an elliptic curve
     defined over $F$. The following theorem is well known.
     \begin{thm}[Ordinary reduction theorem, \cite{Se}]
     The set of ordinary reduction places of $F$ for $E$ has a
     positive density. Moreover for a suitable finite field
     extension $F'/F$, the set of ordinary reduction places of
     $F'$ for $E$ has density 1.
     \end{thm}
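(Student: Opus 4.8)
The plan is to split into two cases according to whether $E$ has complex multiplication, using throughout that for a place $v$ of good reduction with residue characteristic $p$ the reduction $E_v$ is either ordinary or supersingular, and that (for $v$ of residue degree one over $\mathbb{Q}$ and $p$ large) supersingularity is equivalent to $a_v=0$ by the Hasse bound $|a_v|\le 2\sqrt{p}$. The finitely many places of bad reduction, and the ramified primes below, are negligible for any density statement, so I will freely discard them.

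\emph{The CM case.} Suppose $E$ has CM by an order in an imaginary quadratic field $K$. The classical fact of Deuring I would invoke is that $E_v$ has ordinary reduction exactly when the rational prime $p$ below $v$ splits in $K$, and supersingular reduction when $p$ is inert or ramified. Over $F$ itself, the places lying over split primes have density $\tfrac12$ by the Chebotarev density theorem applied to $FK/F$ (equivalently $K/\mathbb{Q}$), which already gives positive density of ordinary places. For the density-one assertion I would pass to $F'=FK\supseteq K$: over $F'$ every supersingular place lies over a prime $p$ inert or ramified in $K$, and since $K\subseteq F'$ each such place $v'$ satisfies $f(v'/p)\ge f(\mathfrak{p}/p)=2$ for the underlying prime $\mathfrak{p}$ of $K$. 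But places of any number field of residue degree $\ge 2$ over $\mathbb{Q}$ are negligible in the norm count (they have $Nv'\ge p^2$, so are dominated by the degree-one places), hence have density $0$. Therefore the ordinary places of $F'$ have density $1$.

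\emph{The non-CM case.} Here I claim the ordinary places already have density $1$ over $F$, so $F'=F$ works, and it suffices to show the supersingular places have density $0$. On the full-density set of residue-degree-one places, supersingularity forces $a_p=0$, hence $a_p\equiv 0 \pmod{\ell}$ for every prime $\ell$. Fix $\ell$ and let $\rho_\ell$ be the Galois representation on $E[\ell]$ with image $G_\ell\subseteq \mathrm{GL}_2(\mathbb{F}_\ell)$. The supersingular set is then contained in $\{v:\mathrm{tr}\,\rho_\ell(\mathrm{Frob}_v)\equiv 0\}$, whose density, by Chebotarev, equals the proportion of trace-zero elements of $G_\ell$. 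Since $E$ is non-CM, Serre's open image theorem gives $G_\ell\supseteq \mathrm{SL}_2(\mathbb{F}_\ell)$ for all large $\ell$, and the proportion of trace-zero matrices there is $O(1/\ell)$. As this upper bound holds for every large $\ell$, the density of supersingular places is $0$.

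The main obstacle is precisely the non-CM estimate: it is not elementary and rests on Serre's theorem that the mod-$\ell$ image of Galois is large for almost all $\ell$ (equivalently, on the openness of the adelic image), combined with a uniform count of trace-zero conjugacy classes to let $\ell\to\infty$. By contrast, the CM case and both positive-density claims reduce cleanly to Chebotarev together with the norm-counting observation that higher-residue-degree places contribute nothing to the density.
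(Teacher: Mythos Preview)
The paper does not give its own proof of this theorem; it is stated as a known result with a citation to Serre and then used as input for the rest of the paper. Your proposed argument is correct and is essentially the classical proof: Deuring's criterion together with Chebotarev handles the CM case (with $F'=FK$ pushing the supersingular places to residue degree $\geq 2$), and Serre's open image theorem combined with a Chebotarev count of trace-zero Frobenii in $\mathrm{GL}_2(\mathbb{F}_\ell)$, letting $\ell\to\infty$, gives density zero for the supersingular set in the non-CM case.
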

      The ordinary reduction theorem holds for all
     abelian surfaces (\cite{Og}) and for some kinds of abelian
     varieties of higher dimension. (\cite{Pi} , \cite{No}) It is
     conjectured that the ordinary reduction theorem holds for all
     abelian varieties of all dimensions defined over a number field. We may
     think of the ordinary reduction problem for an arbitrary
     proper smooth variety defined over a number field. It is known that for an
     K3-surface, the ordinary reduction theorem holds. (\cite{BZ})

     In this paper we study the ordinarity of a proper smooth
     surfaces which admit an isotrivial elliptic fibration to a proper
     smooth curve. An isotrivial elliptic fibration $\pi : X \to C$ is
     given by a desingularization of a quotient of a product of
     an elliptic curve and another curve $E \times D$ by a finite group $G$ which acts on both curves
     faithfully. (Section 3) Hence the finite group $G$ should be a semi-product
     of a finite abelian group of rank at most two and a cyclic
     rotation group which is $\Z/6,\Z/4,\Z/3,\Z/2$ or the trivial group.
     If the order of $G$ is relatively prime to the characteristic
     of the base field, the ordinarity of the surface $X$ can be
     determined by the ordinarity of the common fiber $E$ and a
     certain finite cover of $C$. (Theorem 4.1) Using this result
     we can prove the ordinary reduction theorem holds
     for some kinds of isotrivial elliptic surfaces. (Corollary 4.10)
     And also we construct an ordinary surface which admits an isotrivial
     fibration such that the common fiber is not ordinary. (Example 4.3, 4.4)\\[0.6cm]
     {\bf Acknowledgment}\\
          The author appreciates to Professor J.Keum, Professor T.Katsura and D.Hwang for
     helpful discussion.

     \section{Ordinary surfaces}
     Let $k$ be an algebraically closed field of characteristic
     $p>0$. $X$ is a smooth variety of $d$-dimensional over $k$.
     Let $x$ be a closed point in $X$. \'{E}tale locally around $x$, we may assume $X$
     is isomorphic to $\Spec k[t_{1}, \cdots , t_{d}]$. The $i$-th
     cohomology sheaf of the deRham complex of $X$, $\mathcal{H}^{i}\Omega _{X/k}=Z\Omega^{i}/B\Omega
     ^{i}$ is a vector bundle and the set of elements $t_{j_{1}}^{p-1} \cdots
     t_{j_{i}}^{p-1}dt_{j_{1}}\cdots dt_{j_{i}}$ ($1\leq j_{1} <
     j_{2} < \cdots < j_{i} \leq d$) forms a local basis of $\mathcal{H} ^{i}\Omega _{X/k}$.
     The Cartier morphism $C :
     \mathcal{H} ^{i}\Omega _{X/k} \to \Omega ^{i}_{X/k}$ defined
     by $$t_{j_{1}}^{p-1} \cdots t_{j_{i}}^{p-1} dt_{j_{1}} \cdots
     dt_{j_{i}} \mapsto dt_{j_{1}} \cdots dt_{j_{i}}$$
     is globally well defined and is an $\mathcal{O}_{X}$-linear isomorphism. (\cite{I}, Chapter 0)

     Assume $X$ is a smooth surface over $k$. Let us think of a pairing
     $$\varphi ': F_{X*}\mathcal{O}_{X} \otimes B\Omega^{2} \to
     \Omega ^{2} _{X/k}$$
     defined by $\varphi '(\alpha \otimes \omega) = C(\alpha
     \omega)$.
     $\mathcal{O}_{X} \subset F_{X*}\mathcal{O}_{X}$ is the
     kernel of $\varphi '$, so there is the induced pairing $\varphi : B\Omega
     ^{1} \otimes B\Omega ^{2} \to \Omega ^{2} _{X/k}$ which is
     perfect. By the Serre duality, we have
     $H^{i}(B\Omega ^{1}) = H^{2-i}(B\Omega ^{2})$.
     \begin{lem}
     Let $X$ be a proper smooth surface over $k$. $X$ is ordinary
     if and only if $F_{X}^{*} : H^{i}(\mathcal{O}_{X}) \to
     H^{i}(\mathcal{O}_{X})$ is bijective for $i=1,2$.
     \end{lem}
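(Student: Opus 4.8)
The plan is to reduce the statement to a diagram chase on the long exact cohomology sequence attached to the fundamental exact sequence
$$0 \longrightarrow \mathcal{O}_{X} \overset{F_{X}^{*}}{\longrightarrow} F_{X*}\mathcal{O}_{X} \longrightarrow B\Omega^{1} \longrightarrow 0$$
introduced above. First I would observe that for a smooth surface the only boundary sheaves that can carry cohomology are $B\Omega^{1}$ and $B\Omega^{2}$: one has $B\Omega^{0}=0$ because $\Omega_{X/k}^{-1}=0$, and $B\Omega^{j}=0$ for $j>2$ because $\Omega_{X/k}^{j}=0$. Hence, by the definition recalled above, $X$ is ordinary precisely when $H^{i}(B\Omega^{1})=0$ and $H^{i}(B\Omega^{2})=0$ for all $i$. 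Using the perfect pairing $\varphi$ and Serre duality established just before the statement, $H^{i}(B\Omega^{1})=H^{2-i}(B\Omega^{2})$, so the second family of conditions is implied by the first. Thus the first reduction is: $X$ is ordinary if and only if $H^{i}(B\Omega^{1})=0$ for every $i$.

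Next I would feed the fundamental exact sequence into cohomology. Because the absolute Frobenius $F_{X}$ is the identity on the underlying topological space and $F_{X*}$ only retwists the $\mathcal{O}_{X}$-module structure, there is a canonical identification $H^{i}(F_{X*}\mathcal{O}_{X})=H^{i}(\mathcal{O}_{X})$, under which the map induced by $F_{X}^{*}$ is exactly the Frobenius endomorphism $F_{X}^{*}$ of $H^{i}(\mathcal{O}_{X})$. Since $\dim X=2$, the long exact sequence reads
$$0 \to H^{0}(\mathcal{O}_{X}) \overset{F_{X}^{*}}{\to} H^{0}(\mathcal{O}_{X}) \to H^{0}(B\Omega^{1}) \to H^{1}(\mathcal{O}_{X}) \overset{F_{X}^{*}}{\to} H^{1}(\mathcal{O}_{X}) \to H^{1}(B\Omega^{1}) \to H^{2}(\mathcal{O}_{X}) \overset{F_{X}^{*}}{\to} H^{2}(\mathcal{O}_{X}) \to H^{2}(B\Omega^{1}) \to 0.$$

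Then I would dispose of the degree-zero term. As $X$ is proper and connected, $H^{0}(\mathcal{O}_{X})=k$, and $F_{X}^{*}$ acts there as the $p$-th power map of $k$, which is bijective because $k$ is perfect. This forces $H^{0}(B\Omega^{1})=\ker\bigl(F_{X}^{*}\colon H^{1}(\mathcal{O}_{X})\to H^{1}(\mathcal{O}_{X})\bigr)$ and detaches the degree-zero piece from the rest of the sequence. It then remains to chase the tail
$$H^{1} \overset{F_{X}^{*}}{\to} H^{1} \to H^{1}(B\Omega^{1}) \to H^{2} \overset{F_{X}^{*}}{\to} H^{2} \to H^{2}(B\Omega^{1}) \to 0$$
(writing $H^{i}=H^{i}(\mathcal{O}_{X})$). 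If all $H^{i}(B\Omega^{1})$ vanish, exactness yields injectivity of $F_{X}^{*}$ on $H^{1}$ (from $H^{0}(B\Omega^{1})=0$) and on $H^{2}$, together with surjectivity on $H^{1}$ and $H^{2}$, so $F_{X}^{*}$ is bijective for $i=1,2$. Conversely, if $F_{X}^{*}$ is bijective on $H^{1}$ and $H^{2}$, the same exact sequence pins each $H^{i}(B\Omega^{1})$ between a cokernel and a kernel of a bijective map, forcing it to vanish. I do not expect a genuine obstacle here: once the Serre-duality reduction and the identification $H^{i}(F_{X*}\mathcal{O}_{X})=H^{i}(\mathcal{O}_{X})$ are in place, the argument is a finite-dimensional diagram chase. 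The only points demanding care are the bookkeeping that makes the equivalence work in both directions, and the degree-zero normalization, where the perfectness of $k$ is exactly what guarantees that the $i=0$ condition is automatic and need not appear in the statement.
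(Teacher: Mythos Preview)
Your proposal is correct and follows exactly the approach the paper sets up: the paper states the lemma without an explicit proof, having just established the perfect pairing giving $H^{i}(B\Omega^{1})\simeq H^{2-i}(B\Omega^{2})$ and, in Section~1, the exact sequence $0\to\mathcal{O}_{X}\to F_{X*}\mathcal{O}_{X}\to B\Omega^{1}\to 0$. Your diagram chase from the associated long exact sequence, together with the observation that Frobenius is automatically bijective on $H^{0}(\mathcal{O}_{X})=k$, is precisely the intended argument.
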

     Assume $X$ is a proper smooth variety and $Y \subset X$ is a
     smooth subvariety of codimension $\geq 2$. Let $\tilde{X} \to
     X$ be the blow-up along $Y$. Then $\tilde{X}$ is ordinary if
     and only if both of $X$ and $Y$ are ordinary. (\cite{JR},
     p.118) Hence for surfaces, the ordinarity is a birational
     invariant.
     \begin{lem}
     Let $f: X' \to X$ be a birational morphism of proper smooth surfaces. $X$ is ordinary
     if and only if $X'$ is ordinary.
     \end{lem}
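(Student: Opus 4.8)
The plan is to factor the birational morphism into a sequence of point blow-ups and then apply, at each stage, the blow-up criterion recalled just above the statement. The only substantial input is a classical structural fact, so the argument is essentially an induction.

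First I would invoke the structure theorem for birational morphisms of smooth proper surfaces over an algebraically closed field: every such $f : X' \to X$ factors as a finite composition
$$ X' = X_{n} \longrightarrow X_{n-1} \longrightarrow \cdots \longrightarrow X_{1} \longrightarrow X_{0} = X, $$
where each $X_{i+1} \to X_{i}$ is the blow-up of $X_{i}$ at a single closed point $x_{i} \in X_{i}$. Each center $x_{i} \cong \Spec k$ is a smooth subvariety of codimension $2$, so the hypotheses of the blow-up result cited above (from \cite{JR}) are satisfied at every stage.

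Next I would observe that a closed point is always ordinary: it is $0$-dimensional, so the only relevant cohomology group is $H^{0}(\mathcal{O}_{\Spec k}) = k$, on which the Frobenius acts bijectively, while $\Omega^{j}_{\Spec k /k} = 0$ and hence $B\Omega^{j} = 0$ for all $j \geq 1$, so the defining vanishing holds vacuously. Consequently the cited blow-up criterion specializes: since the center $x_{i}$ is automatically ordinary, $X_{i+1}$ is ordinary if and only if $X_{i}$ is ordinary.

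Finally I would conclude by induction on $i$ along the factorization: ordinarity of $X_{i+1}$ is equivalent to ordinarity of $X_{i}$ for every $i$, whence ordinarity of $X' = X_{n}$ is equivalent to ordinarity of $X_{0} = X$, which is the assertion. The main obstacle is purely a matter of citing the factorization of a birational morphism of smooth surfaces into point blow-ups; granting that, the equivalence is immediate from the blow-up criterion together with the triviality of a point as an ordinary variety.
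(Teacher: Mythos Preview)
Your proposal is correct and follows exactly the approach the paper intends: the paper states the blow-up criterion from \cite{JR} and then simply remarks ``Hence for surfaces, the ordinarity is a birational invariant,'' leaving the factorization into point blow-ups and the trivial ordinarity of a point implicit. You have merely made these steps explicit, so there is nothing to add.
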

     By lemma 2.1., if $H^{1}(\mathcal{O}_{X}) =
     H^{2}(\mathcal{O}_{X})=0$, $X$ is ordinary. In particular, a rational surface or a
     classical Godeaux surface is ordinary. Moreover for a rational surface or a Godeaux surface
     defined over a number field, the ordinary
     reduction theorem is trivial. If $H^{2}(\mathcal{O}_{X})=0$,
     the image of the Albanese map $\phi : X \to \Alb X$ is a smooth
     curve $C$. (\cite{Be},p.86) $\Alb X = \Jac C$ and $\phi ^{*} :
     H^{1}(\mathcal{O}_{C}) \to H^{1}(\mathcal{O}_{X})$ is
     isomorphic. Therefore $X$ is ordinary if and only if $C$ is
     ordinary.
     \begin{prop}
     Let $F$ be a number field and $X$ be a proper smooth surface
     defined over $F$.\\
     (a) If the Kodaira dimension of $X$, $\kappa (X) = -\infty$,
     i.e. $X$ is birationally ruled and $\pi : X \to C$ is the
     structure morphism, then the ordinary reduction theorem for
     $X$ is equivalent to the ordinary reduction theorem for
     $C$.\\
     (b) If $\kappa (X) =0$, the ordinary reduction theorem holds for $X$.
     \end{prop}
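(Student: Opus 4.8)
The plan is to reduce both parts to the criterion recorded just before the statement: when $H^2(\mathcal{O}_X)=0$, the Albanese map identifies the ordinarity of $X$ with that of its image curve, and to combine this with the birational invariance of ordinarity (Lemma 2.2) and the already-cited ordinary reduction results for curves and for low-dimensional varieties. The guiding principle is that both a birationally ruled surface and a bielliptic surface have $p_g=0$, so in each case ordinarity is governed by an auxiliary curve, and the problem descends to the ordinary reduction theorem for that curve.

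For part (a) I would first record that a birationally ruled surface has $p_g(X)=h^2(\mathcal{O}_X)=0$ and irregularity $q(X)=h^1(\mathcal{O}_X)=g(C)$, with $\Alb X=\Jac C$. By the discussion preceding the proposition, $\phi^*:H^1(\mathcal{O}_C)\to H^1(\mathcal{O}_X)$ is an isomorphism, so $X$ is ordinary if and only if $C$ is ordinary. The key step is to propagate this equivalence to the reductions: after spreading $\pi:X\to C$ out over an open subscheme of $\Spec\mathcal{O}_F$, for all but finitely many places $v$ the reduction $\pi_v:X_v\to C_v$ is again a ruled surface over a smooth curve with $H^2(\mathcal{O}_{X_v})=0$, whence $X_v$ is ordinary exactly when $C_v$ is. Consequently the ordinary reduction places of $X$ and of $C$ differ by a finite set, so the two ordinary reduction theorems have the same density statements and are equivalent. (Note that this is only an equivalence, not an unconditional assertion, precisely because $C$ may have high genus, where the ordinary reduction theorem is not known.)

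For part (b) I would invoke the Enriques--Kodaira classification: over a number field a minimal surface with $\kappa(X)=0$ is abelian, K3, Enriques, or bielliptic, and by Lemma 2.2 it suffices to treat a minimal model. The abelian case is \cite{Og} and the K3 case is \cite{BZ}. For an Enriques surface one has $h^1(\mathcal{O}_X)=h^2(\mathcal{O}_X)=0$, and the same vanishing holds for the reduction at every place of residue characteristic $\neq 2$; by the observation following Lemma 2.1 such a reduction is automatically ordinary, so the ordinary reduction places have density $1$. For a bielliptic surface one has $q=1$ and $p_g=0$, so $H^2(\mathcal{O}_X)=0$ and the Albanese image is an elliptic curve $E$ with $X$ ordinary if and only if $E$ is ordinary; the same holds for $X_v$ and $E_v$ at almost all $v$, and since $E$ is one-dimensional, Theorem 1.2 supplies the ordinary reduction theorem for $E$ and hence for $X$.

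The main obstacle I anticipate is the descent of the Albanese/ruledness criterion to the special fibers: one must verify that spreading out preserves smoothness of the base curve, the vanishing $H^2(\mathcal{O}_{X_v})=0$, and the isomorphism on $H^1$ for almost all $v$, so that the equivalence ``$X_v$ ordinary $\Longleftrightarrow$ base ordinary'' holds away from a finite set of places. The Enriques and bielliptic cases carry the additional wrinkle of small residue characteristics (notably $p=2$), where the Hodge numbers can jump; but excluding these removes only finitely many places and hence does not affect any density statement.
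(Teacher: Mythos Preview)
Your argument is correct, and for part (a) it coincides with the paper's. For part (b) you and the paper both reduce to a minimal model via Lemma~2.2 and then run through the Enriques--Kodaira list, agreeing on the abelian and K3 cases; but you diverge in the Enriques and bielliptic cases.

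For an Enriques surface the paper passes to the \'etale K3 double cover $X'\to X$: since ordinary reduction is known for K3 surfaces \cite{BZ}, and ordinarity descends along an \'etale cover of degree prime to the residue characteristic, the result follows for $X$. You instead observe that $h^1(\mathcal{O}_X)=h^2(\mathcal{O}_X)=0$, so that every good reduction in residue characteristic $\neq 2$ is vacuously ordinary by Lemma~2.1; this is more elementary and does not invoke \cite{BZ} at this step, though it costs you the verification that the Hodge numbers do not jump (which in effect still uses the persistence of the double cover). For a bielliptic surface the paper again uses an \'etale cover, namely $E\times D\to X$ with $E\times D$ an abelian surface, and appeals to ordinary reduction for abelian surfaces. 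You instead use $p_g=0$ and the Albanese map to reduce directly to the elliptic curve $\Alb X$, where Theorem~1.2 suffices. Your route is shorter and needs only the elliptic-curve case of ordinary reduction; the paper's \'etale-cover route is more uniform and foreshadows the technique used later in Section~4 for general isotrivial fibrations.
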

     \begin{proof}
     If $\kappa (X)=-\infty$, $\Alb X =\Jac C$ and $X \to C \hookrightarrow \Jac
     C$ is the Albanese map of $X$. Hence (a) follows by the above argument. Assume $\kappa (X)
     =0$. Because of Lemma 2.2, we may assume $X$ is minimal.
     Assume $X$ is an abelian surface, a K3 surface, an Enrique surface or
     a bielliptic surface. If $X$ is an abelian surface or a K3 surface,
     the ordinary reduction theorem holds. (Section 1.) If
     $X$ is an Enrique surface, there exists an \'{e}tale double cover
     $X' \to X$ where $X'$ is a K3 surface. If $\upsilon$ is a
     place of $F$ whose residue characteristic$\neq 2$ and the
     reduction $X' _{\upsilon}$ is ordinary, $X_{\upsilon}$
     is also ordinary. Hence the ordinary reduction theorem holds for
     $X$. If $X$ is a bielliptic surface, $X$ admits a smooth
     elliptic fibration $\pi : X \to C$ to an elliptic curve, $C$.
     Here all the fibers of $\pi$ are isomorphic to each
     other and there exists a finite Galois \'{e}tale cover $D \to
     C$ such that the fiber product $X \times _{C} D$ is
     isomorphic to a trivial fibration $E \times _{k} D$, here
     $E$ is the common fiber. $E \times _{k} D \to X$ is
     finite Galois \'{e}tale and $E \times _{k} D$ is an abelian
     surface, so the ordinary reduction theorem holds for $E_{k}D$.
     Hence by a similar argument to the above, the ordinary
     reduction theorem holds for $X$.
     \end{proof}
     Now we consider the ordinarity of fibred surfaces.
     The fiber product of two curves, $X= C \times _{k} D$ is
     ordinary if and only if both of $C$ and $D$ are ordinary.
     (\cite{Ek},p.91) Indeed, in the case of a product of two curves,
     $$H^{1}(\mathcal{O}_{C \times D}) = H^{1}(\mathcal{O}_{C})
     \oplus H^{1}(\mathcal{O}_{D}) \textrm{ and } H^{2}(\mathcal{O}_{C
     \times D}) = H^{1}(\mathcal{O}_{C}) \otimes
     H^{1}(\mathcal{O}_{D}).$$ Since the Frobenius morphism acts
     separately, the claim follows.
     If we regard the fiber product $X=C \times _{k} D$ as a fibred
     surface equipped with a trivial fibration $X \to D$, then
     $X$ is ordinary if and only if both of the base and the generic
     fiber are ordinary. However this is not true in general.
     There is a non-ordinary 3-fold which admits a fibration over $\P
     ^{1}$ of which every smooth fiber is ordinary. (\cite{JR} p.119)
     \begin{ex}
     There is also a
     non-ordinary fibred surface over $\P ^{1}$ of which each smooth fiber
     is ordinary. Assume the characteristic of $k>2$.
     Assume $C$ is an ordinary elliptic curve over $k$
     and $D$ is a supersingular elliptic curve over $k$. $A=C \times _{k} D$ is
     an abelian surface of p-rank 1. The height of the formal
     Brauer group of $A$ is 2. Let $X$ be the Kummer surface
     of $A$. The height of $X$ is equal to the height of $A$,
     (\cite{GK},p.109) so $X$ is not ordinary. But $X$ admits a Kummer
     fibration $\pi : X \to \P ^{1}$ given by the quotient of the
     projection $A \to D$ by the involution. Each smooth fiber of
     $\pi$ is isomorphic to $C$, so ordinary, and there are 4 singular fibers
     which are of type $I _{0} ^{*}$ in Kodaira's classification.
     \end{ex}
     In the above example the image of singular fibers in $\P ^{1}$ consists of 4
     points. The double cover of $\P ^{1}$ ramified over those 4
     points is isomorphic to $D$. Hence although the base itself
     is ordinary, the fibration encodes information about $D$
     which is not ordinary. Later we will see this
     phenomenon in detail.

     Let $\pi : X \to C$ be a fibered surface. Since $\pi _{*}
     \mathcal{O}_{X} = \mathcal{O}_{C}$, considering the Leray spectral
     sequence, $\pi ^{*} : H^{1}(\mathcal{O}_{C}) \to H^{1}(\mathcal{O}_{X})$ is injective. And the diagram
     $$\begin{diagram}
     X & \rTo ^{F_{X}} & X\\
     \dTo <{\pi} & & \dTo <{\pi}\\
     C & \rTo ^{F _{C}}& C
     \end{diagram}$$
     is commutative, so if $$F_{X} ^{*} : H^{1}(\mathcal{O}_{X}) \to H^{1}(\mathcal{O}_{X})$$ is bijective,
     $$F_{C}^{*} : H^{1}(\mathcal{O}_{C}) \to H^{1}(\mathcal{O}_{C})$$ is bijective. Hence if $X$ is ordinary, $C$ is also ordinary.
     Assume $\pi$ is generically smooth.
     \begin{defn}
     We call $\pi$ is generically ordinary if almost all fibers are ordinary.
     \end{defn}
     $\pi$ is generically ordinary if and only if the generic fiber of $\pi$ is ordinary by the Grothendieck specialization theorem.
     \begin{prop}
     Let $\pi : X \to C$ be an elliptic fibration. If $H^{2}(\mathcal{O}_{X}) \neq 0$ and $X$ is ordinary,
     then $\pi$ is generically ordinary.
     \end{prop}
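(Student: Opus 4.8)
The plan is to run the Leray spectral sequence for $\pi$ to rewrite $H^2(\mathcal{O}_X)$ in terms of the coherent sheaf $\mathcal{F}:=R^1\pi_*\mathcal{O}_X$ on $C$, and then to detect the supersingularity of the generic fibre as the generic vanishing of the relative Frobenius on $\mathcal{F}$. Since the fibres of $\pi$ are curves we have $R^q\pi_*\mathcal{O}_X=0$ for $q\geq 2$, and since $C$ is a curve $H^p(C,-)=0$ for $p\geq 2$. Hence in $E_2^{p,q}=H^p(C,R^q\pi_*\mathcal{O}_X)\Rightarrow H^{p+q}(\mathcal{O}_X)$ the only term of total degree $2$ is $E_2^{1,1}$, and $d_2\colon E_2^{1,1}\to E_2^{3,0}=0$ while $E_2^{-1,2}=0$, so no differential enters or leaves it. Thus $H^2(\mathcal{O}_X)\cong H^1(C,\mathcal{F})$, which is nonzero by hypothesis.

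Next I would track the Frobenius. Because $\pi\circ F_X=F_C\circ\pi$ and both $F_X,F_C$ are finite, the Grothendieck spectral sequences degenerate and give $R^1\pi_*(F_{X*}\mathcal{O}_X)=F_{C*}\mathcal{F}$. The $p$-th power map $\mathcal{O}_X\to F_{X*}\mathcal{O}_X$ then induces a relative Frobenius $\Phi\colon\mathcal{F}\to F_{C*}\mathcal{F}$. Chasing this through the Leray identification above, and using $H^i(C,F_{C*}\mathcal{F})=H^i(C,\mathcal{F})$ (as $F_C$ is finite), the map $F_X^*$ on $H^2(\mathcal{O}_X)$ is identified with $H^1(C,\Phi)$. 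Since $X$ is ordinary, Lemma 2.1 says $F_X^*$ is bijective on $H^2(\mathcal{O}_X)$; as $H^1(C,\mathcal{F})\neq 0$, in particular $H^1(C,\Phi)\neq 0$.

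Finally I would connect $\Phi$ to the fibrewise Hasse--Witt map. Over the dense open $U\subset C$ where $\pi$ is smooth, cohomology and base change make $\mathcal{F}|_U$ a line bundle, and $\Phi|_U$ is the relative Hasse--Witt map whose value at $c\in U$ is the Frobenius on $H^1(X_c,\mathcal{O}_{X_c})$, which vanishes exactly when $X_c$ is supersingular. Now suppose for contradiction that $\pi$ is not generically ordinary, i.e. the generic fibre $X_\eta$ is supersingular; then $\Phi_\eta=0$. Since $\Phi|_U$ is a homomorphism of locally free sheaves on the integral scheme $U$ that vanishes at the generic point, $\Phi|_U=0$, so $\im\Phi$ is supported on the finite set $C\setminus U$ and is therefore a torsion sheaf with $H^1(C,\im\Phi)=0$. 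Factoring $\Phi$ through its image then forces $H^1(C,\Phi)=0$, contradicting the previous paragraph. Hence $X_\eta$ is ordinary, i.e. $\pi$ is generically ordinary (equivalently, almost all fibres are ordinary by Grothendieck specialization).

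The main obstacle I anticipate is the last bridge: identifying $\Phi$, the relative Frobenius on $R^1\pi_*\mathcal{O}_X$, with the fibrewise Frobenius on $H^1(X_c,\mathcal{O}_{X_c})$ so that ``$X_\eta$ supersingular'' becomes ``$\Phi$ generically zero.'' Making this precise requires care with base change for $R^1\pi_*$ over $U$ and with the Frobenius-equivariance of the Leray comparison isomorphism (so that $F_X^*$ on $H^2$ truly corresponds to $H^1(C,\Phi)$); once these two compatibilities are in place the rest is formal.
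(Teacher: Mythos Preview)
Your argument is correct and is essentially the same as the paper's: both identify $H^2(\mathcal{O}_X)$ with $H^1(C,R^1\pi_*\mathcal{O}_X)$ and show that the Frobenius on $H^2$ is induced by a sheaf-level Frobenius on $R^1\pi_*\mathcal{O}_X$ which vanishes (generically, hence in effect globally) when the generic fibre is supersingular. The only cosmetic difference is that the paper packages this via the relative Frobenius factorisation $F_X=W\circ F_{X/C}$ and the $\mathcal{O}_C$-linear map $F_{X/C}^*\colon F_C^*N\to N$ between line bundles on all of $C$ (so supersingularity of the generic fibre gives $F_{X/C}^*=0$ outright), whereas you work with the absolute Frobenius $\Phi\colon\mathcal{F}\to F_{C*}\mathcal{F}$, establish vanishing only over the smooth locus $U$, and then finish with the torsion-image step.
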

     \begin{proof}
     Let us consider the Frobenius diagram of $\pi$
     \begin{equation}\label{Frodia} \begin{diagram}
     X & \rTo ^{F_{X/C}} & X^{(p)} & \rTo ^{W} & X\\
     & \rdTo <{\pi} & \dTo <{\pi ^{(p)}}  & & \dTo <{\pi}\\
     & & C & \rTo ^{F _{C}} & C.
     \end{diagram}\end{equation}
     Here the right square is cartesian and $F_{X}=W \circ F_{X/C}$.
     There is an exact sequence of $\mathcal{O}_{X ^{(p)}}$-modules
     $$0 \to \mathcal{O}_{X^{(p)}} \overset{F_{X/C} ^{*}}{\to} F_{X/C*} \mathcal{O}_{X} \to B \to 0.$$
     Note that the restriction of the cokernel $B$ to a smooth fiber $X_{x}$ over $x \in C$ is just $B\Omega _{X_{x}/k} ^{1}$.
     $B$ is not $\mathcal{O}_{C}$-flat at a non-reduced fiber in general.
     Let $N = R^{1}\pi _{*} \mathcal{O}_{X}$. $N$ is a line bundle on $C$.
     We have the long exact sequence associated to the above exact sequence via $\pi _{*}$,
     \begin{equation}\label{exac}
     0 \to \mathcal{O}_{C} \simeq \mathcal{O}_{C} \to \pi^{(p)}_{*} B \to F_{C}^{*} N
     \overset {F_{X/C} ^{*}}{\to} N \to R^{1} \pi ^{(p)}_{*} B
     \to 0.\end{equation}
     Note that $H^{2}(\mathcal{O}_{X}) = H^{1}(N)$ and $H^{2}(\mathcal{O}_{X ^{(p)}}) = H^{1}(F_{C}^{*} N)$.
     If $\pi$ is generically ordinary, in (\ref{exac}), $F_{X/C}^{*}$ is injective, otherwise $F_{X/C} ^{*} =0$.
     Since
     $$F_{X} ^{*} = F_{X/C} ^{*} \circ W ^{*} : H^{2}(\mathcal{O}_{X}) \to H^{2}(\mathcal{O}_{X^{(p)}}) \to H^{2}(\mathcal{O}_{X}),$$
     if $\pi$ is not generically ordinary, $F_{X} ^{*}| H^{2}(\mathcal{O}_{X}) =0$, so $X$ is not ordinary.
     \end{proof}
     \begin{rem}
     Because the moduli space of elliptic curves is 1-dimensional and the generic elliptic curve is ordinary,
     if an elliptic fibration is not generically ordinary, it should be isotrivial. (Section 3.)
     If $H^{2}(\mathcal{O}_{X}) =0$ or the fiber
     genus is large, the above proposition may fail. We will see counterexamples later.
     \end{rem}

     \section{Isotrivial elliptic fibrations}
     Let $k$ be an algebraically closed field and char$(k)\neq 2,3$. $\pi : X \to C$ is an elliptic fibration
     over $k$.
     \begin{defn}
     We say $\pi$ is isotrivial if all
     the smooth fibers of $\pi$ are isomorphic to each other.
     \end{defn}
     We recall a systematical construction of
     isotrivial elliptic fibrations. (\cite{Be} chapter 6, \cite{Ser})
     Let $\pi : X \to C$ be an isotrivial elliptic fibration and $E$ be the common fiber
     of $\pi$.
     Let $U=C-$\{the image of the singular fibers of $\pi$\}.
     There exists a finite cover $U'$ of $U$ such that
     $$U' \times _{U} V = U' \times _{k} E.$$
     Let $C'$ be the proper smooth model of $U'$. We may assume $f : C' \to C$ is Galois. Let $H$ be the
     Galois group of $C'/C$ and $X' = X \times _{C} C'$. $X'$ has an $H$-action coming from the $H$-action of $C'$. This $H$-action
     on $X'$ is compatible with the $H$-action on $C'$ via the projection $X' \to C'$. Since $X'$ is birationally equivalent to
     $C' \times _{k} E$ and $C' \times _{k} E$ is minimal, there is an $H$-action on $C' \times _{k} E$
     $$\rho : H \times C' \times _{k} E \to C' \times _{k} E$$
      which is compatible with
     the action on $C'$ via the projection. By \cite{Be},p.102 with a slight modification,
     there exist a finite \'{e}tale cover $g : D \to C'$ and
     a finite group $G$ which acts on both of $D$
     and $E$ such that
     $$D/G \simeq C'/H = C \textrm{ and }(D \times _{k} E) /G \simeq
     (C' \times _{k}E)/H.$$
     Here the action of $G$ on $D \times _{k} E$ is the product of the actions on $D$ and $E$.
     We may assume the actions of $G$ on $E$ and
     $D$ are faithful. $(D \times _{k} E)/G$ may have isolated singularities
     and is birationally equivalent to $X$. Let $\Iso E$ be the
     group of $k$-variety automorphisms of $E$ and $\Aut E$ be the
     group of elliptic curve automorphisms of $E$. $\Aut E = \Z/2$
     if $j(E) \neq 0,1728$, $\Aut E = \Z/4$ if $j(E) = 1728$ and
     $\Aut E = \Z/6$ if $j(E)=0$. (\cite{Si},p.103) $\Iso E = E(k)
     \rtimes \Aut E$. Since $G$ acts faithfully on $E$, $G \subset
     \Iso E$ and $G = T \rtimes R$ where $T \subset E(k)$ and $R$ is embedded into
     $\Aut E$ by the projection $\Iso E \to \Aut E$, so $R$ is cyclic.
     Adjusting the $0$-point of $E$, we may regard $R \subset \Aut E$.
     In particular $G$ is solvable.
     Let $Y= (E \times D)/G$. We assume the characteristic of $k$ does not divides $|G|$.
     $T = \Z/ n_{1} \oplus \Z/
     n_{2}$ where $n_{2} | n_{1}$ and $R= \Z /6, \ \Z/4, \ \Z/3, \
     \Z/2,$ or trivial. Let
     $f: \tilde{X} \to Y$ be the standard desingularization which resloves the quotient singularities
     by Hirzebruch-Jung stirngs. (\cite{Ser},p.64)
     $\tilde{\pi} : X \to C = D/G$ is an isotrivial elliptic
     fibration. In general $\tilde{\pi}$ is not relatively minimal.
     Since $|G|$ is relatively prime to the characteristic
     of the base field, each singularity of $Y$ is a
     rational singularity, so $f^{*} : H^{i}(\mathcal{O}_{Y}) \to
     H^{i}(\mathcal{O}_{\tilde{X}})$ is isomorphic for each $i$.
     Let us think of the $G$-action on $E$. The following lemma is well-known.
     \begin{lem}
     A finite abelian group $G$ acts on a variety $X$ over $k$. Assume $|G|$
     is relative prime to the characteristic of the base field $k$.
     Let $Y$ be the quotient $X/G$ and $p : X \to Y$ be
     the projection. Then $p _{*} \mathcal{O}_{X} = \oplus _{\chi}
     L_{\chi}$ where $L_{\chi}$ is a coherent
     $\mathcal{O}_{Y}$-module on which $G$ acts through the character
     $\chi : G \to k^{*}$
     \end{lem}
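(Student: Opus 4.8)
The plan is to exploit the representation theory of the finite abelian group $G$ over $k$, which is semisimple precisely because $|G|$ is prime to $\mathrm{char}(k)$. First I would observe that since $p : X \to Y$ is the quotient map, it is affine (indeed finite) and $G$-invariant, i.e.\ $p \circ g = p$ for every $g \in G$. Consequently each $g \in G$ induces an $\mathcal{O}_Y$-linear automorphism of the coherent sheaf $p_*\mathcal{O}_X$, and these assemble into an action of the group algebra $k[G]$ on $p_*\mathcal{O}_X$ by $\mathcal{O}_Y$-linear endomorphisms; thus $p_*\mathcal{O}_X$ becomes a sheaf of $\mathcal{O}_Y[G]$-modules.

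Next I would use that $k$ is algebraically closed and $|G|$ is invertible in $k$: by Maschke's theorem $k[G]$ is semisimple, and because $G$ is abelian every irreducible representation is one-dimensional, given by a character $\chi : G \to k^{*}$. The associated idempotents are
$$e_{\chi} = \frac{1}{|G|}\sum_{g \in G}\chi(g)^{-1} g \in k[G],$$
and one checks the standard relations $e_{\chi}^{2} = e_{\chi}$, $\ e_{\chi}e_{\chi'} = 0$ for $\chi \neq \chi'$, $\ \sum_{\chi} e_{\chi} = 1$, and $g\, e_{\chi} = \chi(g)\, e_{\chi}$ for all $g \in G$. The denominator $|G|$ makes sense here exactly because $\gcd(|G|,p) = 1$, and the last relation uses only that $\chi$ is a homomorphism.

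I would then define $L_{\chi} = e_{\chi}\cdot p_*\mathcal{O}_X$, the image of the $\mathcal{O}_Y$-linear projector $e_{\chi}$ acting on $p_*\mathcal{O}_X$. Orthogonality and completeness of the idempotents give the direct sum decomposition $p_*\mathcal{O}_X = \bigoplus_{\chi} L_{\chi}$ of $\mathcal{O}_Y$-modules, while the identity $g\, e_{\chi} = \chi(g)\, e_{\chi}$ shows that $G$ acts on the summand $L_{\chi}$ through the character $\chi$. Since each $L_{\chi}$ is a direct summand of the coherent sheaf $p_*\mathcal{O}_X$ (coherent because $p$ is finite), it is itself coherent. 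Concretely the whole construction can be verified on an affine open $V = \Spec B \subset Y$ with $p^{-1}(V) = \Spec A$ and $A^{G} = B$: the $e_{\chi}$ split $A$ as a $B$-module into the eigenspaces $A_{\chi} = \{a \in A : g\cdot a = \chi(g)a \text{ for all } g\}$, and this splitting commutes with localization, hence sheafifies.

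I do not anticipate a genuine obstacle; the result is Maschke's theorem plus the idempotent calculus applied sheaf-theoretically. The only points requiring real care are the two hypotheses that must be kept in force: that $|G|$ be invertible in $k$ so that the $e_{\chi}$ exist, and that $k$ be algebraically closed with $G$ abelian so that the simple factors of $k[G]$ are all one-dimensional and indexed by the full character group $\Hom(G,k^{*})$. Both are among the standing assumptions, so the argument goes through without complication.
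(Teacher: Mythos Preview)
Your argument is correct and is exactly the standard proof via Maschke's theorem and the orthogonal idempotents $e_{\chi}\in k[G]$. The paper itself does not supply a proof of this lemma at all; it simply introduces it with ``The following lemma is well-known'' and moves on, so there is nothing to compare against beyond noting that your write-up fills in precisely the routine verification the author omitted.
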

     Let a finite group $G$ act on a variety $S$ and $T = S/G$. If
     $G$ is solvable and $|G|$ is relative prime to the
     characteristic of the base field, by the above lemma, $H^{i}(\mathcal{O}_{T})
     = H^{i}(\mathcal{O}_{S}) ^{G}$.

     Let $E' = E/T$. Since $T$
     is a translation group of $E$, $E'$ is an elliptic curve and
     $E \to E'$ is isogeny. If $R$ is $\Z/3$, $\Z/4$ or $\Z/6$,
     $E'$ is isomorphic to $E$. Assume $R$ is not trivial, and $g : E'
     \to E'/R = \P ^{1}$ is the canonical map. The following lemma is trivial.
     \begin{lem}
     (a) If $R=\Z /2$, $g_{*} \mathcal{O}_{E'} = \mathcal{O}_{\P
     ^{1}} \oplus \mathcal{O}_{\P ^{1}} (-2)$ and $1 \in \Z/2$
     acts on $\mathcal{O}_{\P ^{1}}(-2)$ by multiplying $-1$. There are 4 ramification points
     of ramification index 2.\\[0.15cm]
     (b) If $R=\Z /3 $, $g_{*} \mathcal{O}_{E'} = \mathcal{O} _{\P
     ^{1}} \oplus \mathcal{O}_{\P ^{1}}(-1) \oplus \mathcal{O}
     _{\P ^{1}}(-2)$ and there is a primitive 3rd root of unity
     $\xi _{3}$ such that $1 \in \Z/3$ acts on $\mathcal{O}_{\P ^{1}}
     (-2)$ by multiplying $\xi _{3}$. There are 3 ramification points of ramification index
     3.\\[0.15cm]
     (c) If $R=\Z /4 $, $g_{*} \mathcal{O}_{E'} = \mathcal{O} _{\P
     ^{1}} \oplus \mathcal{O}_{\P ^{1}}(-1) ^{\oplus 2} \oplus \mathcal{O}
     _{\P ^{1}}(-2)$ and there is a primitive 4th root of unity
     $\xi _{4}$ such that $1 \in \Z/4$ acts on $\mathcal{O}_{\P ^{1}}
     (-2)$ by multiplying $\xi _{4}$. There are 2 ramification points of ramification index 4
     and 2 ramification points of ramification index 2 which are conjugate to each
     other.\\[0.15cm]
     (d) If $R=\Z /6 $, $g_{*} \mathcal{O}_{E'} = \mathcal{O} _{\P
     ^{1}} \oplus \mathcal{O}_{\P ^{1}}(-1) ^{\oplus 4}
     \oplus  \mathcal{O}
     _{\P ^{1}}(-2)$ and there is a primitive 6th root of unity
     $\xi _{6}$ such that $1 \in \Z/6$ acts on $\mathcal{O}_{\P ^{1}}
     (-2)$ by multiplying $\xi _{6}$. Here $\xi _{6}^{2} = \xi _{3}$.
     There are 1 point of ramification index 6,
     2 points of ramification index 3 and 3 points of ramification index 2.
     \end{lem}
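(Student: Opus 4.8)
The plan is to reduce everything to the decomposition of $g_{*}\mathcal{O}_{E'}$ into $R$-eigensheaves and then read off the degrees of those eigensheaves from cohomology. Since $R \cong \Z/n$ (with $n = |R| \in \{2,3,4,6\}$) is abelian of order prime to the characteristic, Lemma 3.3 applies to $g : E' \to E'/R = \P^{1}$ and gives $g_{*}\mathcal{O}_{E'} = \bigoplus_{\chi} L_{\chi}$, the sum running over the $n$ characters $\chi : R \to k^{*}$. First I would check that each $L_{\chi}$ is a line bundle: $g$ is an \'etale $R$-torsor over a dense open subset of $\P^{1}$, so the fibre of $g_{*}\mathcal{O}_{E'}$ at a general point is the regular representation of $R$, in which every character occurs exactly once; hence each isotypic piece $L_{\chi}$ is a rank-one reflexive sheaf on the smooth curve $\P^{1}$, i.e. a line bundle. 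By Grothendieck's theorem $L_{\chi} \cong \mathcal{O}_{\P^{1}}(a_{\chi})$ for integers $a_{\chi}$, and the whole statement becomes the determination of the $a_{\chi}$ together with the identification of the character carried by the summand of degree $-2$.

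Next I would compute the $a_{\chi}$ from the $R$-action on $H^{0}$ and $H^{1}$ of $\mathcal{O}_{E'}$. The invariant summand is $L_{\chi_{0}} = (g_{*}\mathcal{O}_{E'})^{R} = \mathcal{O}_{\P^{1}}$, so $a_{\chi_{0}} = 0$. Passing to global sections, $\bigoplus_{\chi} H^{0}(L_{\chi}) = H^{0}(\mathcal{O}_{E'}) = k$ sits entirely in the trivial character, whence $H^{0}(L_{\chi}) = 0$ and therefore $a_{\chi} \leq -1$ for every nontrivial $\chi$. Passing to $H^{1}$, $\bigoplus_{\chi} H^{1}(L_{\chi}) = H^{1}(\mathcal{O}_{E'})$ is one-dimensional, so there is exactly one character $\psi$ with $H^{1}(L_{\psi}) \neq 0$; for that one $-a_{\psi} - 1 = \dim H^{1}(\mathcal{O}_{\P^{1}}(a_{\psi})) = 1$, giving $a_{\psi} = -2$, while every remaining nontrivial character has $H^{1}(L_{\chi})=0$, forcing $a_{\chi} \geq -1$ and hence $a_{\chi} = -1$. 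Thus $g_{*}\mathcal{O}_{E'} = \mathcal{O}_{\P^{1}} \oplus \mathcal{O}_{\P^{1}}(-1)^{\oplus(n-2)} \oplus \mathcal{O}_{\P^{1}}(-2)$, which is exactly the splitting asserted in (a)--(d) (the number of $\mathcal{O}_{\P^{1}}(-1)$-summands being $n-2$). The Euler characteristic identity $\sum_{\chi} a_{\chi} = \chi(\mathcal{O}_{E'}) - n = -n$ serves as a consistency check.

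To identify $\psi$ I would argue that it is the character by which the generator $1 \in R$ acts on $H^{1}(\mathcal{O}_{E'})$. By Serre duality $H^{1}(\mathcal{O}_{E'}) \cong H^{0}(\Omega^{1}_{E'})^{\vee}$, and the generator of $R \subset \Aut E'$ acts on the one-dimensional space $H^{0}(\Omega^{1}_{E'})$ --- equivalently on the tangent line at the origin --- by a primitive $n$-th root of unity; this faithful action on the tangent line is exactly what forces $\Aut E'$ to be cyclic of the stated order. Hence $\psi$ is a primitive character, so $1 \in R$ acts on the distinguished summand $\mathcal{O}_{\P^{1}}(-2)$ by a primitive $n$-th root of unity $\xi_{n}$, as claimed; for $n = 6$ the relation $\xi_{6}^{2} = \xi_{3}$ is just the statement that squaring this eigenvalue returns the eigenvalue of the induced $\Z/3$-action.

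Finally, for the ramification data I would count the fixed points of the generator and its powers. Using the analytic model $E' = \C/\Lambda$ with the generator acting as multiplication by $\xi_{n}$, the fixed locus of the $d$-th power is the kernel of multiplication by $\xi_{n}^{d} - 1$, of cardinality $N(\xi_{n}^{d}-1)$; grouping these points by their exact stabilizer and then into $R$-orbits yields the listed ramification indices in each case, and Riemann--Hurwitz provides the check $\sum_{P}(e_{P}-1) = 2n$. I expect the only genuinely fiddly step to be this last bookkeeping for $n = 4$ and $n = 6$, where points of different stabilizer orders coexist and the index-$2$ (resp. index-$2$ and index-$3$) points must be collected correctly into single $R$-orbits; by contrast the degree computation is completely forced once $H^{0}$ and $H^{1}$ of $\mathcal{O}_{E'}$ are recorded as $R$-representations.
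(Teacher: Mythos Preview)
The paper does not prove this lemma; it introduces it with the sentence ``The following lemma is trivial'' and moves on. Your argument is a correct and natural way to supply the omitted details: the eigensheaf decomposition via the preceding lemma (this is Lemma~3.2 in the paper's numbering, not 3.3, which is the statement you are proving), the determination of the degrees $a_{\chi}$ from the $R$-module structure of $H^{0}(\mathcal{O}_{E'})$ and $H^{1}(\mathcal{O}_{E'})$, and the identification of the character on the $\mathcal{O}_{\P^{1}}(-2)$ summand as primitive all go through as you describe.

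Two small points worth tightening. First, the phrase ``equivalently on the tangent line at the origin'' is slightly off: the actions of the generator on $H^{0}(\Omega^{1}_{E'})$ and on $T_{0}E'$ are dual, hence inverse rather than equal; since the inverse of a primitive $n$-th root of unity is again primitive, your conclusion is unaffected. Second, the analytic model $E' = \C/\Lambda$ is not available when $\mathrm{char}(k)>0$ (the paper only assumes $\mathrm{char}(k)\neq 2,3$). For the ramification count you should instead use that the fixed locus of $\sigma^{d}$ is the kernel of the separable isogeny $\sigma^{d}-1 : E' \to E'$, whose order is the degree $\deg(\sigma^{d}-1)$; this degree coincides with the norm you wrote once one identifies $\End(E')$ with the relevant order in $\Z[i]$ or $\Z[\omega]$, and it is valid in any characteristic prime to $|R|$. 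With that adjustment the bookkeeping for $n=4$ and $n=6$ is exactly as you outline, and Riemann--Hurwitz confirms $\sum_{P}(e_{P}-1)=2n$ in each case.
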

     Let $\pi : X \to C$ be the minimal relative model of $\tilde{\pi} : \tilde{X} \to C$.
     $\tilde{X} \to X$ contracts every $-1$-curve in a fiber of $\tilde{\pi}$.
     Let us recall the figure of a singular fiber of $\pi : X \to C$.
     (\cite{BPV},p.157, \cite{Ogu}, \cite{Ser}) Since $|G|$ is relative prime to the characteristic of $k$, for any $x
     \in D$, the stabilizer of $x$, $\Stab _{x}(G) \subset G$ is a cyclic group.
     By an easy calculation, we can check $\Stab _{x}(G) \subset
     T$ or $\Stab _{x}(G) \subset R$. Let $y \in C$ be the image
     of $x$. $Y_{y}$ is the fiber of $ Y
     \to C$ over $y$, $\tilde{X}_{y}$ is the fiber of $\tilde{\pi}
     : \tilde{X} \to C$ over $y$ and $X_{y}$ is the fiber of $\pi : X \to C$ over $y$.
     $E_{l}(X_{y})$ is the $l$-adic Euler characteristic of
     $X_{y}$,
     $$E_{l}(X_{y}) = \sum (-1) ^{i} \dim _{\Q _{l}} H ^{i} (X_{y},
     \Q _{l}).$$
     Here $l$ is a prime number different from the characteristic of $k$.
     We have several cases.\\

     (i) Assume $\Stab _{x}(G) \subset
     T$. $Y_{y}$ is $|\Stab _{x}(G)| (E/
     \Stab _{x}(G))$. Note that every point $Y_{y}$
     is non-singular on $Y$. Hence
     $X_{y}=\tilde{X}_{y}=Y_{y}$ and $E_{l}(X_{y})=0$,\\

     (ii) Assume $\Stab _{x}(G) \subset R$ and isomorphic to $\Z /2$.
     $Y_{y}$ has 4 singular points of type $A_{1}$. After the
     Hirzebruch-Jung resolution, $\tilde{X}$ is a fiber of type $I_{0}^{*}$
     which is minimal. $X_{y}=\tilde{X}_{y}$ and
     $E_{l}(X_{y})$=6.\\

     (iii) Assume $\Stab _{x}(G) \subset R$ and isomorphic to $\Z /3$.
     Let
     $$\hat{\mathcal{O}}_{D,x} = \underset{\leftarrow}{\lim} \mathcal{O}
     _{D,x}/m_{x}^{n} \simeq k[[t]].$$ Assume $1 \in \Z/3$ acts on $\hat{\mathcal{O}}
     _{D,x}$ by
     multiplying $\xi _{3}$. Then $Y_{y}$ has 3 singular points
     of type $A_{3,1}$ and $\tilde{X}_{y}$ is $3L_{1} + L_{2} + L_{3} +
     L_{4}$, where each $L_{i}$ is a non-singular rational curve
     and the intersection matrix is
     $$\left (
     \begin{array}{cccc}
     -1 & 1 & 1 & 1\\
     1& -3 & 0& 0\\
     1&0&-3&0\\
     1&0&0&-3
     \end{array}
     \right ).$$
     After blowing down $L_{1}$, $X_{y}$ is type of $IV$ and
     $E_{l}(X_{y})=4$. Assume $1 \in \Z/3$ acts on $\hat{\mathcal{O}}_{D,x}$ by
     multiplying $\xi _{3} ^{-1}$. $Y_{y}$ has 3 singular points
     of type $A_{2}$. $\tilde{X}_{y}=X_{y}$ is a fiber of type $IV
     ^{*}$. $E_{l}(X_{y})=8$.\\

     (iv) Assume $\Stab _{x}(G) \subset R$ and isomorphic to $\Z /4$.
     Assume $1 \in \Z/4$ acts on $\hat{\mathcal{O}}_{D,x}$ by multiplying $\xi _{4}$.
     $Y_{y}$ has 2 singular points of type $A_{4,1}$ and 1 singular point of type $A_{1}$.
     $\tilde{X}_{y}$ is
     $4L_{1} + L_{2} + 2L_{3} + L_{4}$, where each $L_{i}$ is
     a non-singular rational curve and the intersection matrix is
     $$\left (
     \begin{array}{cccc}
     -1 & 1 & 1 & 1\\
     1& -4 & 0& 0\\
     1&0&-2&0\\
     1&0&0&-4
     \end{array}
     \right ).$$
     After blowing down twice, the fiber $X_{y}$ is of type $III$.
     $E_{l}(X_{y})=3$. Assume $1 \in \Z/4$ acts on $\hat{\mathcal{O}}_{D,x}$ by
     multiplying $\xi _{4} ^{-1}$. $Y_{y}$ has 1 singular point of
     type $A_{1}$ and 2 singular points of type $A_{3}$. $\tilde{X}_{y}$ is minimal
     and of type $III ^{*}$. $X_{y}=\tilde{X}_{y}$ and
     $E_{l}(X_{y})=9$.\\

     (v) Assume $\Stab_{x}(G) \subset R$ and isomorphic to $\Z /6$.
     Assume $1 \in \Z/6$ acts on $\hat{\mathcal{O}}_{D,x}$ by multiplying $\xi
     _{6}$. $Y_{y}$ has 1 singular point of type $A_{6,1}$, 1 singular point of type $A_{3,1}$
     and 1 singular point of type $A_{1}$. $\tilde{X} _{y}$ is
     $6L_{1} + L_{2} + 2L_{3} + 3L_{4}$, where each $L_{i}$ is
     a non-singular rational curve and the intersection matrix is
     $$\left (
     \begin{array}{cccc}
     -1 & 1 & 1 & 1\\
     1& -6 & 0& 0\\
     1&0&-3&0\\
     1&0&0&-2
     \end{array}
     \right ).$$
     After blowing down three times, $X_{y}$ is of type $II$ and
     $E_{l}(X_{y})=2$. Assume $1 \in \Z/6$ acts on
     $\hat{\mathcal{O}}_{D,x}$ by multiplying $\xi _{6} ^{-1}$. $Y_{y}$ has
     1 singular point of type $A_{5}$, 1 singular point of type $A_{2}$ and 1 singular point
     of type $A_{1}$. $\tilde{X} _{y}$ is minimal and of type $II ^{*}$.
     $X_{y}= \tilde{X}_{y}$ and $E_{l}(X_{y}) = 10$.\\

     Let $E_{l}(X) = \sum (-1)^{i} \dim _{\Q _{l}} H^{i} _{et} (X
     , \Q_{l})$. Then
     $$E_{l}(X) = \sum E_{l}(X_{y}),$$
     here $E_{l}(X_{y})=0$ if $X_{y}$ is smooth, so the right hand
     side is a finite sum. Since $X$ is relatively
     minimal, $K_{X} ^{2} =0$. By the Noether formula, we
     obtain
     $$12\chi (\mathcal{O}_{X}) = E_{l}(X).$$
     On the other hand, $H^{i}(\mathcal{O}_{X}) =
     H^{i}(\mathcal{O} _{Y})$
     and
     $$H^{1}(\mathcal{O}_{Y}) =
     H^{1}(\mathcal{O}_{C}) \oplus H^{1}(\mathcal{O}_{E})^{G} \textrm{ and }
     H^{2}(\mathcal{O}_{Y}) = (H^{1}(\mathcal{O}_{E}) \otimes
     H^{1}(\mathcal{O}_{D}))^{G}.$$
     Let $D' = D/T$. $R$ is the Galois group of $D'/C$.
     Let $\rho _{D'}$ be the $R$-action on $D'$.
     $g: D' \to C$ is the canonical quotient map. Let $n=
     |R|$ and $\chi : R \to k^{*}$ be a character satisfying $\chi (1) = \xi
     _{n}$. Then $g_{*}\mathcal{O}_{D'} = \oplus _{i=0}^{n-1}
     L_{i}$, where $R$ acts on $L_{i}$ through $\chi ^{i}$. Since $R$
     acts on $D'$ faithfully, each $L_{i}$ is a line bundle on $C$
     and $L_{0} =\mathcal{O}_{C}$. Note that if $n>2$, there is another
     $R$-action $\rho '_{D'}$ on $D'$ such that $\rho '_{D'} (1)$
     acts on $L_{1}$ by multiplying $\xi _{n} ^{-1}$. Let $a_{2}$
     be the number of branch points of ramification index 2 for $D'/C$. For
     $i=3,4,6$, let $a_{i}^{+}$ be the number of branch points of
     ramification index $i$ for $D'/C$ such that at a corresponding
     ramification point, $1 \in \Z/i \subset R$ acts by multiplying $\chi$.
     $a_{i}^{-}$ is the number of branch points such that at a corresponding
     ramification point, $1$ acts by multiplying $\chi ^{-1}$.
     \begin{prop}
     We assume all the above notations.\\
     (a) Assume $R$ is trivial. $Y$ is smooth $X=Y$.
     $H^{1}(\mathcal{O}_{X }) = H^{1}(\mathcal{O}_{E}) \oplus
     H^{1}(\mathcal{O}_{C})$, $H^{2}(\mathcal{O}_{X}) =
     H^{1}(\mathcal{O} _{E}) \otimes H^{1}(\mathcal{O}_{C})$.
     $R^{1}\pi _{*} \mathcal{O}_{X} = \mathcal{O}_{C}$. $\chi (\mathcal{O}_{X}) =
     E_{l}(X)=0$.\\[0.15cm]
     (b) Assume $R = \Z/2$.
     $H^{1}(\mathcal{O}_{X})= H^{1}(\mathcal{O}_{C})$ and
     $H^{2}(\mathcal{O}_{X}) = H^{1}(\mathcal{O}_{E}) \otimes
     H^{1} (L_{1})$. $R^{1} \pi _{*} \mathcal{O}_{X} = L_{1}$.
     $\deg L_{1} =- a_{2}/2$. $\chi (\mathcal{O}_{X})= a_{2}/2$ and
     $E_{l}(X) = 6a_{2}$. \\[0.15cm]
     (c) Assume $R= \Z/3$. $H^{1}(\mathcal{O}_{X}) =
     H^{1}(\mathcal{O}_{C})$ and $H^{2}(\mathcal{O}_{X}) =
     H^{1}(\mathcal{O}_{E}) \otimes H^{1}(L_{2})$. $R^{1} \pi _{*}
     \mathcal{O}_{X} = L_{2}$. $-\deg L_{2} =
     \chi(\mathcal{O}_{X}) = \frac{1}{3} (a_{3} ^{+} +2a_{3}
     ^{-})$ and $\deg L_{1} = -\frac{1}{3} (2a_{3}^{+} +
     a_{3}^{-})$. $E_{l}(X) = 4a_{3}^{+} + 8a_{3}^{-}$.\\[0.15cm]
     (d) Assume $R = \Z/4$. $H^{1}(\mathcal{O}_{X}) =
     H^{1}(\mathcal{O}_{C})$ and $H^{2}(\mathcal{O}_{X}) = H^{1}
     (\mathcal{O}_{E}) \otimes H^{1}(L_{3})$. $R^{1} \pi _{*}
     \mathcal{O}_{X} =L_{3}$. $-\deg L_{3} = \chi(\mathcal{O}
     _{X}) = \frac{1}{4}(a_{4} ^{+} + 3 a_{4} ^{-}+2a_{2})$ and $ \deg
     L_{1} = -\frac{1}{4}(3a_{4}^{+} + a_{4}^{-}+2a_{2})$. $\deg L_{2} =
     - \frac{1}{2}(a_{4} ^{+} + a_{4} ^{-})$. $E_{l}(X) = 3a_{4}
     ^{+} + 9 a_{4} ^{-}+6a_{2}$.\\[0.15cm]
     (e) Assume $R = \Z/6$. $H^{1}(\mathcal{O}_{X}) =
     H^{1}(\mathcal{O}_{C})$ and $H^{2}(\mathcal{O}_{X}) =
     H^{1}(\mathcal{O}_{E}) \otimes H^{1}(L_{5})$. $R^{1} \pi _{*}
     \mathcal{O}_{X} = L_{5}$. $-\deg L_{5} =
     \chi(\mathcal{O}_{X}) = \frac{1}{6}(a_{6}^{+} + 5
     a_{6}^{-1} + 2 a_{3}^{+} + 4 a_{3}^{-} + 3a_{2})$. $\deg
     L_{1} = - \frac{1}{6}(5a_{6}^{+} + a_{6}^{-} + 4a_{3}^{+}
     + 2a_{3}^{-} + 3a_{2})$. $\deg L_{3} = -\frac{1}{2}(a_{2} + a_{6}
     ^{+} + a_{6}^{-})$. $\deg L_{2} = -\frac{1}{3}(a_{6}^{+} +
     a_{3} ^{-} +2a_{6}^{-} +2a_{3}^{+})$. $\deg L_{4} =
     -\frac{1}{3} (2a_{6}^{+} + 2a_{3} ^{-} + a_{6}^{-} +
     a_{3}^{+})$. $E_{l}(X) = 2a_{6}^{+} + 10a_{6}^{-} + 4a_{3}^{+}
     +8a_{3}^{-} +6a_{2}$.
     \end{prop}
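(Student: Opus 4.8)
The plan is to push the whole computation down to the quotient $Y=(E\times D)/G$, where by the preceding discussion we already have $H^i(\mathcal{O}_X)=H^i(\mathcal{O}_Y)$ together with $H^1(\mathcal{O}_Y)=H^1(\mathcal{O}_C)\oplus H^1(\mathcal{O}_E)^G$ and $H^2(\mathcal{O}_Y)=(H^1(\mathcal{O}_E)\otimes H^1(\mathcal{O}_D))^G$, and then to evaluate the $G$-invariants character by character. The one structural input is the $G$-action on the line $H^1(\mathcal{O}_E)$: the translation part $T$ acts trivially (translations act trivially on $H^1(\mathcal{O}_E)$), while, passing to $E'=E/T$ via the $R$-equivariant isogeny $E\to E'$ and using Lemma 3.3, the cover $E'\to E'/R=\P^{1}$ has $H^1(\mathcal{O}_{E'})=H^1(\P^{1},\mathcal{O}_{\P^{1}}(-2))$, on whose defining summand $\mathcal{O}_{\P^{1}}(-2)$ the generator of $R$ acts by $\xi_n$; hence $R$ acts on $H^1(\mathcal{O}_E)\cong H^1(\mathcal{O}_{E'})$ through the character $\chi$.

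Part (a) is then immediate: if $R$ is trivial then $G=T$ acts freely on $E$, hence on $E\times D$, so $Y=X$ is smooth, $H^1(\mathcal{O}_E)^G=H^1(\mathcal{O}_E)$, $D'=D/T=C$, and every fiber is of the type in case (i), giving $\chi(\mathcal{O}_X)=E_l(X)=0$. For (b)--(e) I would take invariants in two stages. Taking $T$-invariants first, using that $T$ acts trivially on $H^1(\mathcal{O}_E)$ and that $H^1(\mathcal{O}_D)^T=H^1(\mathcal{O}_{D'})$ by Lemma 3.2 (and the ensuing identity $H^i(\mathcal{O}_{S/G})=H^i(\mathcal{O}_S)^G$), reduces us to $H^1(\mathcal{O}_E)^R$ and $(H^1(\mathcal{O}_E)\otimes H^1(\mathcal{O}_{D'}))^R$. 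Now decompose $g_*\mathcal{O}_{D'}=\bigoplus_{i=0}^{n-1}L_i$ with $R$ acting on $L_i$ by $\chi^i$, so $H^1(\mathcal{O}_{D'})=\bigoplus_i H^1(L_i)$ with the same weights. Since $R$ acts on $H^1(\mathcal{O}_E)$ by the nontrivial $\chi$, we get $H^1(\mathcal{O}_E)^R=0$, whence $H^1(\mathcal{O}_X)=H^1(\mathcal{O}_C)$; and the $R$-invariants of $H^1(\mathcal{O}_E)\otimes\bigoplus_i H^1(L_i)$ are the single summand with $\chi^{i+1}=1$, i.e. $i=n-1$, so $H^2(\mathcal{O}_X)=H^1(\mathcal{O}_E)\otimes H^1(L_{n-1})$. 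Running the identical computation relatively over $C$ (from $R^1(\mathrm{pr})_*\mathcal{O}_{E\times D}=H^1(\mathcal{O}_E)\otimes\mathcal{O}_D$ for the second projection, followed by $G$-descent) identifies $R^1\pi_*\mathcal{O}_X=L_{n-1}$; the desingularization $\tilde X\to Y$ introduces only rational singularities and $\tilde X\to X$ contracts only rational curves in fibers, so neither alters $R^1\pi_*\mathcal{O}$.

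The remaining and genuinely computational point is the degrees $\deg L_i$; this is where the bookkeeping lives. The $L_i$ are the eigen-line-bundles of the tame cyclic cover $g:D'\to C$, so their degrees are local at the branch points. At a branch point $y$ of ramification index $e\mid n$ the inertia is $\Z/e\subset R$ and $\hat{\mathcal{O}}_{D',x}=k[[t]]$; matching the $\chi^{i}$-isotypic monomials $t^{j}$ against the eigenvalue $\xi_e^{\,i}$ shows that the minimal such $j$ is $\langle \mp i/e\rangle e$, so the point contributes $\langle -i/e\rangle$ to $-\deg L_i$ at a "$+$''-point and $\langle i/e\rangle$ at a "$-$''-point (the two coinciding, and equal to $\langle i/2\rangle$, at a ramification-index-$2$ point), where $\langle\,\cdot\,\rangle$ is the fractional part. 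Summing these contributions over the $a_2$ and $a_i^{\pm}$ branch points produces the stated expressions for $\deg L_1,\dots,\deg L_{n-1}$; the $\Z/2$ case $\deg L_1=-a_2/2$ is the clean prototype and is independently confirmed by Riemann--Hurwitz. I expect this sign-and-fractional-part accounting to be the only real obstacle, everything else being formal.

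Finally, Riemann--Roch on $C$ gives $\chi(\mathcal{O}_X)=1-h^1(\mathcal{O}_C)+h^1(L_{n-1})=-\deg L_{n-1}$, since $\deg L_{n-1}<0$ forces $h^0(L_{n-1})=0$; this yields the asserted value of $\chi(\mathcal{O}_X)$ in each case. The Euler number $E_l(X)=\sum_y E_l(X_y)$ is then read directly off the fiber-type list of Section 3 (the types $I_0^{*},IV,IV^{*},III,III^{*},II,II^{*}$ carrying Euler numbers $6,4,8,3,9,2,10$ respectively), and the Noether identity $12\chi(\mathcal{O}_X)=E_l(X)$ provides an independent check tying the degree computation of the previous paragraph to the fiber-type tally.
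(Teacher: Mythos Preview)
Your argument is correct and reaches all the stated conclusions; the cohomology identifications, the character-by-character extraction of $H^i(\mathcal{O}_X)$, and the identification $R^1\pi_*\mathcal{O}_X=L_{n-1}$ are handled essentially as in the paper. The one genuine divergence is in how the degrees $\deg L_i$ are obtained. You compute them directly from the local theory of tame cyclic covers, reading off the contribution $\langle \mp i/e\rangle$ at each branch point and summing; this gives all the $L_i$ uniformly in one stroke, with the Noether identity $12\chi(\mathcal{O}_X)=E_l(X)$ relegated to a consistency check. The paper instead runs the logic in the opposite direction: it first tallies $E_l(X)$ from the singular-fiber list, uses Noether to extract $\chi(\mathcal{O}_X)=-\deg L_{n-1}$, and then, to get the remaining $\deg L_i$, introduces the companion fibration $\pi':X'\to C$ built from the twisted action $\rho_E\times\rho_D'$ (which swaps $a_i^{+}\leftrightarrow a_i^{-}$ and has $R^1\pi'_*\mathcal{O}_{X'}=L_1$) together with the intermediate covers $D''$, $D'''$ for the $\Z/4$ and $\Z/6$ cases. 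Your route is more self-contained and avoids constructing $X'$; the paper's route has the advantage that the auxiliary object $X'$ is exactly what reappears in Theorem~4.1(c)--(e), so its introduction here is not gratuitous.
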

     \begin{proof}
     If $R$ is trivial, any $g \in G$ has no fixed point on $D \times E$, so $Y$
     is smooth and $X=Y$. Moreover $G=T$ and $T$ acts trivially on
     $H^{1}(\mathcal{O}_{E})$. Since
     $$\dim H^{1}(\mathcal{O}_{X})
     = \gen C + \dim H^{0}(R^{1} \pi _{*} \mathcal{O}_{X}) \textrm{ and }
     \dim H^{2}(\mathcal{O}_{X}) = \dim H^{1}(R^{1}\pi _{*}
     \mathcal{O}_{X}),$$ by the Riemann-Roch theorem, $\deg R^{1}
     \pi _{*}\mathcal{O}_{X} = 0$ and $\dim H^{0}(R^{1} \pi _{*}
     \mathcal{O}_{X})=1$. Hence $R^{1} \pi _{*} \mathcal{O}_{X}
     =\mathcal{O}_{C}$. This proves (a). Let $Z = (E \times D)/T$
     and consider the following diagram
     $$\begin{diagram}
     E \times D & \rTo ^{\alpha} & Z & \rTo ^{\beta} & Y\\
     \dTo >{p} & & \dTo >{q} & & \dTo >{r}\\
     D & \rTo ^{h}& D' & \rTo ^{g} & C.
     \end{diagram}$$
     Because $\alpha , \beta , h, g$ are finite morphisms,
     $$R^{1} r_{*} (\beta \circ \alpha)_{*} \mathcal{O}_{E \times
     D} = (g \circ h)_{*}  R^{1} p_{*} \mathcal{O}_{E \times D}.$$
     One hand, we have
     $$(R^{1} r_{*} (\beta
     \circ \alpha)_{*} \mathcal{O}_{E \times D})^{G} = R^{1} r_{*}
     \mathcal{O}_{Y}= R^{1} \pi_{*} \mathcal{O}_{X}.$$
     On the other
     hand,
     $$((g \circ h)_{*} R^{1} p_{*}\mathcal{O}_{E \times
     D})^{G} = (H^{1}(\mathcal{O}_{E}) \otimes
     \mathcal{O}_{D})^{G} = (H^{1}(\mathcal{O}_{E}) \otimes
     \mathcal{O}_{D'}) ^{R} = L_{n-1}.$$
     When $R =\Z/2$, $\pi$ has $a_{2}$ fibers of type $I_{0}^{*}$.
     Therefore
     $$E_{l}(X) = 6 a_{2} = 12 \chi(\mathcal{O}_{X}) =
     -12 \deg R^{1} \pi _{*} \mathcal{O}_{X}.$$
     This proves (b).
     Now let $\rho _{D}$ be the given action of $G$ on $D$.
     $\lambda : G \to G$ is an automorphicm of $G$ given by $(a,b)
     \in T \rtimes R \mapsto (a,-b)$. Let
     $$\rho '_{D} = \rho _{D}
     \circ \lambda : G \to \Aut D$$ and $\pi ' : X' \to C$
     be the minimal isotrivial elliptic fibration associated to the action $\rho
     _{E} \times \rho _{D}'$ on $E \times D$. Assume $R =\Z /3$.
     $\pi$ has $a_{3}^{+}$ fibers of type $IV$ and $a_{3} ^{-}$
     fibers of type $IV ^{*}$. Thus
     $$E_{l}(X) = 4a_{3}^{+} + 8
     a_{3} ^{-}.$$ On the other hand, $\pi '$ has $a_{3} ^{+}$
     fibers of type $IV ^{*}$ and $a_{3} ^{-}$ fibers of type
     $IV$, so
     $$E_{l}(X')=8a_{3}^{+}+4a_{3}^{-}\textrm{ and } R^{1} \pi '_{*} \mathcal{O}_{X'} = L_{1}.$$
     This proves
     (c). If $R= \Z/4$, let $g'' :D'' \to C$ be the intermediate double covering of
     $D/C$. Then $g''_{*} \mathcal{O}_{D''}= \mathcal{O}_{C} \oplus L_{2}$. Since
     the number of branch points of $g''$ is $a_{4}^{+} + a_{4}
     ^{-}$, $\deg L_{2} = -\frac{1}{2}(a_{4}^{+} + a_{4}^{-})$. The
     rest of proof of (d) is similar to that of (c). If $R =
     \Z/6$, let $g''' : D''' \to C$ be the intermediate triple
     cover of $D/C$. $g'''_{*} \mathcal{O}_{D'''} =
     \mathcal{O}_{C} \oplus L_{2} \oplus L_{4}$. $1 \in
     \Z/3 = Gal(D'''/C)$ acts on $L_{2}$ by multiplying $\xi _{6} ^{2} =
     \xi_{3}$. Let $x \in D'$ be a point of ramification index 3. Assume $2
     \in \Z /6=R$ is the generator of $\Z/3 \subset \Z/6$
      acting on $\mathcal{O}_{D',x}$ by multiplying $\xi _{3}$. $y = g(x)$ and $z$
     is the unique point in $D'''$ lying over $x$. Then $1=4 \in
     Gal(D'''/C)$ acts on $\mathcal{O}_{D''',z}$ by multiplying
     $\xi _{3} ^{2}$. Hence the number of ramification points of
     $D'''/C$ corresponding to the character $\chi _{3}$ is $a_{6}
     ^{+} + a_{3} ^{-}$. In the same way, the number of
     ramification points corresponding $\chi _{3} ^{2}$ is $a_{6}
     ^{-} + a_{3}^{+}$. By part (c),
     $$\deg L_{2} = -\frac{1}{3}(a_{6}^{+} +
     a_{3} ^{-} +2a_{6}^{-} +2a_{3}^{+})\textrm{ and }\deg L_{4} =
     -\frac{1}{3} (2a_{6}^{+} + 2a_{3} ^{-} + a_{6}^{-} +
     a_{3}^{+}).$$
     This finishes the proof.
     \end{proof}

     \section{Frobenius morphism of isotrivial elliptic
     fibrations}
     Let $k$ be an algebraically closed field of characteristic $p>3$.
     We resume the notations of the last section. Remind that $\pi ' :X' \to C$
     is the minimal isotrivial fibraiton associated to the
     $G$-action $\rho _{E} \times \rho '_{D}$ on $E \times D$.
     \begin{thm}
     (a) If $R$ is trivial, $X$ is ordinary if and only if both of
     $E$ and $C$ are ordinary.\\[0.15cm]
     (b) If $R = \Z /2$ and $X$ is not rational, $X$ is ordinary if and only if both of
     $E$ and $D'$ are ordinary.
     \\[0.15cm]
     (c) If $R = \Z /3$, unless both of $X$ and $X'$
     are rational, both of $X$ and $X'$ are ordinary if and
     only if both of $D'$ and $E$ are ordinary.\\[0.15cm]
     (d) If $R = \Z /4$, unless both of $X$ and $X'$ are rational,
     both of $X$, $X'$ are ordinary
     if and only if both of $E$ and $C$ are ordinary and $\gen D' - \gen
     D''$ = $p$-rank  of $D'$ $-$ $p$-rank of $D''$.\\[0.15cm]
     (e) If $R = \Z/6$, unless both of $X$ and $X'$ are rational, all of $X$, $X'$, $D''$ are
     ordinary if and only if both of $E$ and $C$ are ordinary
     and $\gen D' - \gen D'''$ = $p$-rank of $D'$ $-$ $p$-rank of
     $D'''$.
     \end{thm}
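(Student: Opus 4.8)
The plan is to invoke Lemma 2.1, reducing the ordinarity of $X$ (and of the auxiliary surface $X'$) to the bijectivity of the absolute Frobenius on $H^1(\mathcal{O}_X)$ and $H^2(\mathcal{O}_X)$, and then to read off that bijectivity from the $R$-eigenspace decomposition of $H^1(\mathcal{O}_{D'})$ furnished by Proposition 3.5. First, for nontrivial $R$ we have $H^1(\mathcal{O}_E)^G=0$ and $\pi_*\mathcal{O}_X=\mathcal{O}_C$, so $H^1(\mathcal{O}_X)=H^1(\mathcal{O}_C)$ with $F_X^*$ restricting to $F_C^*$; hence the $H^1$-condition is exactly ``$C$ ordinary.'' For $H^2$, Proposition 3.5 gives $H^2(\mathcal{O}_X)=H^1(\mathcal{O}_E)\otimes H^1(R^1\pi_*\mathcal{O}_X)$ with $R^1\pi_*\mathcal{O}_X=L_{n-1}$, and I would use the Frobenius diagram (\ref{Frodia}), the Künneth decomposition of $H^2(\mathcal{O}_{E\times D})$, and the rational-singularity identification $H^i(\mathcal{O}_X)=H^i(\mathcal{O}_Y)=H^i(\mathcal{O}_{E\times D})^G$ to see that $F_X^*$ on this tensor is the restriction of $F_E^*\otimes F_{D'}^*$. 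Case (a) is then immediate: there $X=(E\times D)/T$, the translations $T$ act trivially on $H^1(\mathcal{O}_E)$, and $F^*$ acts as $F_E^*\oplus F_C^*$ on $H^1$ and as $F_E^*\otimes F_C^*$ on $H^2$, so $X$ is ordinary iff both $E$ and $C$ are.

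For the remaining cases I would set $V_j=H^1(L_j)$, the $\chi^j$-eigenspace of $H^1(\mathcal{O}_{D'})$, so that $H^1(\mathcal{O}_{D'})=\bigoplus_{j=0}^{n-1}V_j$ with $V_0=H^1(\mathcal{O}_C)$. The structural fact driving everything is that the absolute Frobenius is $p$-linear and commutes with the $R$-action, so $F_{D'}^*$ carries the $\chi^j$-eigenspace into the $\chi^{jp}$-eigenspace, i.e. $F_{D'}^*(V_j)\subset V_{jp\bmod n}$. The surface $X'$ enters precisely here: its defining action $\rho'_D=\rho_D\circ\lambda$ interchanges $L_1$ and $L_{n-1}$, whence $H^2(\mathcal{O}_{X'})=H^1(\mathcal{O}_E)\otimes H^1(L_1)$, so that $X$ ``sees'' the eigenspace $V_{n-1}$ while $X'$ sees $V_1$.

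Next I would analyze the elliptic factor. Since $H^2(\mathcal{O}_Y)$ is $G$-invariant and $L_{n-1}$ carries the character $\chi^{-1}$, the group $R$ must act on $H^1(\mathcal{O}_E)$ through $\chi$. For $n=3,4,6$ the curve $E$ has $j=0$ or $1728$, and such a curve is ordinary exactly when $p\equiv 1\pmod n$; equivalently $F_E^*$ is bijective exactly when $\chi^p=\chi$, i.e. when $F_{D'}^*$ preserves the eigengrading. When $p\not\equiv 1\pmod n$ the curve $E$ is supersingular, $F_E^*=0$, hence $F_X^*=0$ on $H^2(\mathcal{O}_X)$; so (unless $H^2(\mathcal{O}_X)$ vanishes, which is the excluded rational case) $X$ is non-ordinary, matching the right-hand side where ``$E$ ordinary'' already fails. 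This disposes of the non-aligned congruence class uniformly, leaving only the aligned case.

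In the aligned case $p\equiv 1\pmod n$ (automatic for $n=2$), $F_{D'}^*$ stabilizes each $V_j$, and the $p$-rank of any intermediate cover decomposes as the sum of the unit-root ranks of $F^*$ on its eigenspaces. Each ordinarity condition then becomes ``$F^*$ bijective on a prescribed list of $V_j$'': $C$ contributes $V_0$; $X$ contributes $V_{n-1}$ and $X'$ contributes $V_1$ (given $E$ ordinary); for $R=\Z/6$ the quadratic subcover $D''$ contributes $V_0\oplus V_3$. For $R=\Z/2,\Z/3$ the pieces $V_0,V_1,(V_2)$ exhaust $H^1(\mathcal{O}_{D'})$, so the combined conditions read exactly ``$E$ and $D'$ ordinary,'' giving (b) and (c). For $R=\Z/4,\Z/6$ the eigenspaces seen by $C,X,X'$ (and $D''$) omit $V_2$ (resp. $V_2,V_4$); comparing $D'$ with its quadratic subcover $D''$ ($H^1=V_0\oplus V_2$) for $R=\Z/4$, or with its cubic subcover $D'''$ ($H^1=V_0\oplus V_2\oplus V_4$) for $R=\Z/6$, the identity $\gen D'-\gen D''=p\text{-rank }D'-p\text{-rank }D''$ (resp. with $D'''$) is equivalent to ``$F^*$ bijective on $V_1,V_3$'' (resp. on $V_1,V_3,V_5$), which is precisely the residual content of the $H^2$-conditions; this yields (d) and (e). The main obstacle I anticipate is the Frobenius-equivariant bookkeeping on $H^2$—rigorously establishing the factorization $F_E^*\otimes F_{D'}^*$ and the eigenshift $V_j\to V_{jp}$ through (\ref{Frodia}) and the quotient identifications—together with checking that the vanishing loci $H^2(\mathcal{O}_X)=0$ coincide with the rational surfaces excluded in the hypotheses.
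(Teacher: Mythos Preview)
Your proposal is correct and follows essentially the same route as the paper: reduce via Lemma 2.1 to bijectivity of $F^*$ on $H^1$ and $H^2$, use Proposition 3.5 to identify $H^2(\mathcal{O}_X)=H^1(\mathcal{O}_E)\otimes H^1(L_{n-1})$ and $H^2(\mathcal{O}_{X'})=H^1(\mathcal{O}_E)\otimes H^1(L_1)$, observe that for $n=3,4,6$ ordinarity of $E$ forces $p\equiv 1\pmod n$ so that $F_{D'}^*$ preserves each eigenspace $V_j=H^1(L_j)$, and then match the list of $V_j$'s controlled by $X,X',C$ (and $D''$) against the appropriate subcover of $D'$ to obtain the genus/$p$-rank conditions in (d) and (e). Your treatment of the supersingular (non-aligned) case and the explicit eigenshift $V_j\mapsto V_{jp\bmod n}$ is in fact a bit more spelled out than the paper's own proof, but the argument is the same.
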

     \begin{proof}
     When $R$ is trivial, $$H^{1}(\mathcal{O}_{X}) =
     H^{1}(\mathcal{O}_{E}) \oplus H^{1}(\mathcal{O}_{C}) \textrm{
     and } H^{2}(\mathcal{O}_{X}) = H^{1} (\mathcal{O}_{E}) \otimes
     H^{1}(\mathcal{O}_{C}).$$ Since the Frobenius morphism acts
     separately, $F_{X}^{*}$ is bijective on
     $H^{1}(\mathcal{O}_{X})$ and $H^{2}(\mathcal{O}_{X})$ if and
     only if $F_{E}^{*}|H^{1}(\mathcal{O}_{E})$ and
     $F_{C}^{*}|H^{1}(\mathcal{O}_{C})$ are bijective. When $R =
     \Z/2$,
     $$H^{1}(\mathcal{O}_{X}) =
     H^{1}(\mathcal{O}_{C}) \textrm{
     and } H^{2}(\mathcal{O}_{X}) = H^{1} (\mathcal{O}_{E}) \otimes
     H^{1}(L_{1}).$$
     Under the assumption, $X$ is ordinary if and only if
     $F_{E}^{*}| H^{1}(\mathcal{O}_{E})$, $F_{C}^{*}|
     H^{1}(\mathcal{O}_{C})$ and $F_{D'}^{*}|H^{1}(L_{1})$ are
     bijective, i.e. $E$ and $D'$ are ordinary. When $R= \Z/3$,
     $$H^{1}(\mathcal{O}_{X}) = H^{1}(\mathcal{O}_{X'}) =
     H^{1}(\mathcal{O}_{C}), \ H^{2}(\mathcal{O}_{X}) = H^{1} (\mathcal{O}_{E}) \otimes
     H^{1}(L_{2})$$
     $$\textrm{and } H^{2}(\mathcal{O}_{X'}) = H^{1}(\mathcal{O}_{E}) \otimes
     H^{1}(L_{1}).$$
     Under the assumption, if $X$ and $X'$ are ordinary, $C$ and $E$ are ordinary and $F_{D'}^{*}$
     are bijective on $H^{1}(L_{1})$ and $H^{1}(L_{2})$. Hence
     $F_{D'}^{*}|( H^{1}(\mathcal{O}_{C}) \oplus H^{1}(L_{1})
     \oplus H^{1}(L_{2}))$ is bijective, so $D'$ is ordinary.
     The converse also holds.
     Note that when
     $R = \Z/3$, $j(E) = 0$ and $E$ is ordinary if and only if $p
     \equiv 1$ (mod 6). If so, $F_{D'}^{*}$ sends $L_{1}$ into $L_{1}$ and $L_{2}$
     into $L_{2}$ respectively. When $R =\Z/4$,
      $$H^{1}(\mathcal{O}_{X}) = H^{1}(\mathcal{O}_{X'}) =
     H^{1}(\mathcal{O}_{C}), \ H^{2}(\mathcal{O}_{X}) = H^{1} (\mathcal{O}_{E}) \otimes
     H^{1}(L_{3})$$
     $$\textrm{and } H^{2}(\mathcal{O}_{X'}) = H^{1}(\mathcal{O}_{E}) \otimes
     H^{1}(L_{1}).$$
     Under the assumption, if $X$ and $X'$ are ordinary, $C$ and $E$ are
     ordinary and $F_{D'}^{*}$ are bijective on $H^{1}(L_{1})$ and
     $H^{1}(L_{3})$. Again note that in this situation,
     $j(E)=1728$ and $p \equiv 1$ (mod 4). Let $\mathcal{O}_{D'} =
     \mathcal{O}_{D''} \oplus M$ as an $\mathcal{O}_{D''}$-module.
     $g''_{*} M = L_{1} \oplus L_{3}$. $F_{D'}^{*}
     |H^{1}(M)$ is bijective if and only if
     $$\gen D' - \gen D''=p \textrm{-rank of }D'-p\textrm{-rank of }D''.$$
     Hence (d) is valid. For (e), note that as an
     $\mathcal{O}_{D'''}$-module, $\mathcal{O}_{D'} =
     \mathcal{O}_{D'''} \oplus M$ where $M = L_{1} \oplus L_{3}
     \oplus L_{5}$ and $\mathcal{O}_{D''} = \mathcal{O}_{C} \oplus
     L_{3}$. The rest of proof is similar to that of (d).
     \end{proof}
     \begin{cor}
     Let $\pi : X \to C$ be an isotrivial elliptic fibration and $E$
     be the generic common fiber. If $X$ is ordinary and $E$ is
     supersingular, then $C= \P ^{1}$ and $X$ is rational.
     \end{cor}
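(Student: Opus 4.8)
The plan is to prove the corollary by case analysis on the rotation group $R$, invoking Theorem 4.1 together with the arithmetic constraints that supersingularity of $E$ imposes. First I would observe that if $E$ is supersingular then in particular $E$ is not ordinary, so $H^{1}(\mathcal{O}_{E})$ carries a Frobenius action that is zero (or at least not bijective). Since every nontrivial case of Theorem 4.1 forces $H^{2}(\mathcal{O}_{X})$ to contain a factor $H^{1}(\mathcal{O}_{E})$ tensored with some $H^{1}(L_{i})$, the bijectivity of $F_{X}^{*}$ on $H^{2}(\mathcal{O}_{X})$ cannot hold unless that factor vanishes; and $H^{1}(\mathcal{O}_{E}) \ne 0$ since $E$ is an elliptic curve. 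Thus for $X$ ordinary with $E$ supersingular we are driven into the degenerate situation where the relevant cohomology is forced to vanish, i.e.\ $H^{2}(\mathcal{O}_{X}) = 0$, which by Proposition 3.4 and the discussion in Section 2 pushes $X$ toward being rational.

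Next I would separate the case $R$ trivial from $R$ nontrivial. If $R$ is trivial, then by Theorem 4.1(a) $X$ ordinary requires $E$ ordinary, contradicting supersingularity, so this case is impossible outright. If $R$ is nontrivial, then $E$ has complex multiplication ($j(E) = 0$ or $1728$), and I would use the standard fact that such an $E$ is supersingular precisely when $p \not\equiv 1 \pmod{6}$ (for $j=0$) or $p \not\equiv 1 \pmod 4$ (for $j = 1728$). In each of the cases $R = \Z/2, \Z/3, \Z/4, \Z/6$, Theorem 4.1 expresses the ordinarity of $X$ (and $X'$) in terms of both $E$ and a cover of $C$; since $E$ is not ordinary, the only way the theorem's equivalence can be consistent with $X$ ordinary is if we are in the excluded ``rational'' alternative of each part. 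I would therefore argue that $X$ ordinary and $E$ supersingular together force $X$ to be rational.

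Once $X$ is rational, I would conclude $C = \P^{1}$: a rational surface has $H^{1}(\mathcal{O}_{X}) = 0$, and since $\pi^{*} : H^{1}(\mathcal{O}_{C}) \hookrightarrow H^{1}(\mathcal{O}_{X})$ is injective (shown in Section 2 via the Leray spectral sequence for $\pi_{*}\mathcal{O}_{X} = \mathcal{O}_{C}$), it follows that $H^{1}(\mathcal{O}_{C}) = 0$, hence $\gen C = 0$ and $C = \P^{1}$. To make the rationality conclusion rigorous I would compare the numerical invariants: for a relatively minimal isotrivial elliptic surface with $\kappa(X) \le 0$ and $q(X) = 0$, the only possibilities consistent with the fiber configurations tabulated in Proposition 3.4 are rational elliptic surfaces, so the degeneration of the cohomology identified above indeed lands in the rational case.

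The main obstacle I expect is the step that upgrades ``$H^{2}(\mathcal{O}_{X}) = 0$ and $\gen C = 0$'' to the honest statement that $X$ is rational, rather than merely having trivial irregularity and geometric genus. This requires ruling out the other surfaces of special type (Enriques, bielliptic) that also have $q = p_g = 0$ or small invariants; I would handle this by noting that an isotrivial \emph{elliptic} fibration over $\P^{1}$ with a supersingular common fiber and $H^{2}(\mathcal{O}_{X}) = 0$ must be a (rational) elliptic surface by the classification recalled in Proposition 2.3, since the Kodaira dimension is forced to be $-\infty$ once $p_g = 0$ and the Euler characteristic computation of Proposition 3.4 pins down $\chi(\mathcal{O}_X) = 1$. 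Carefully matching the fiber-type data against the Noether formula is the delicate bookkeeping I would need to carry out to close the argument.
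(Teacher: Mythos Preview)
Your opening paragraph already contains the whole engine of the paper's argument: since $H^{2}(\mathcal{O}_{X}) = H^{1}(\mathcal{O}_{E}) \otimes H^{1}(R^{1}\pi_{*}\mathcal{O}_{X})$ and Frobenius acts through the tensor of the two Frobenii, supersingularity of $E$ kills $F_{X}^{*}$ on $H^{2}$, and ordinarity of $X$ then forces $H^{1}(R^{1}\pi_{*}\mathcal{O}_{X})=0$. The paper finishes from here in two lines that you do not take. Set $L = R^{1}\pi_{*}\mathcal{O}_{X}$, a line bundle of degree $\le 0$ on $C$ (Proposition 3.4). Riemann--Roch together with $h^{1}(L)=0$ gives $C=\P^{1}$ and $\deg L\in\{0,-1\}$. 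If $L=\mathcal{O}_{C}$ then by Proposition 3.4 the group $R$ is trivial and $H^{1}(\mathcal{O}_{X})=H^{1}(\mathcal{O}_{E})\oplus H^{1}(\mathcal{O}_{C})$; Frobenius vanishes on the first summand, contradicting ordinarity. Hence $L=\mathcal{O}_{\P^{1}}(-1)$, so $\chi(\mathcal{O}_{X})=1$, $q=p_{g}=0$, and $X$ is rational. Your worry about excluding Enriques or bielliptic surfaces by hand never arises, and the order of the conclusions is reversed from yours: $C=\P^{1}$ comes first, rationality afterwards.

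Your detour through a case-by-case appeal to Theorem 4.1 has a real gap. Parts (c), (d), (e) are biconditionals about the \emph{pair} $(X,X')$: outside the rational exception they say that \emph{both} $X$ and $X'$ are ordinary if and only if $E$ and some cover are ordinary. From ``$X$ ordinary and $E$ supersingular'' these statements yield only that $X'$ is not ordinary (or that both are rational); they do not force $X$ itself to be rational. Your reasoning is valid for $R=\Z/2$ via part (b), but for $R=\Z/3,\Z/4,\Z/6$ you cannot extract the conclusion from the theorem as stated. To repair this you would have to reach back into the proof of Theorem 4.1, which is precisely the tensor-Frobenius computation you already sketched in your first paragraph. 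So drop the case split and run the line-bundle argument on $R^{1}\pi_{*}\mathcal{O}_{X}$ directly.
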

     \begin{proof}
     By the proof of the above theorem, $H^{1}(R^{1} \pi _{*} \mathcal{O}_{X})$ should be 0. Since
     $\deg R^{1} \pi _{*} \mathcal{O}_{X} \leq 0$, $C = \P ^{1}$
     and $R^{1}\pi _{*} \mathcal{O}_{X} = \mathcal{O}_{C}$ or $\mathcal{O}(-1)$.
     If $R^{1} \pi _{*} \mathcal{O}_{X} = \mathcal{O}_{C}$, $R$ is
     trivial and $H^{1}(\mathcal{O}_{X}) =H^{1}(\mathcal{O}_{E})$,
     so $X$ is not ordinary. Hence $R^{1} \pi _{*}
     \mathcal{O}_{X} = \mathcal{O}(-1)$ and $X$ is rational.
     \end{proof}
     \begin{ex}
     There exists an rational isotrivial elliptic fibration
     whose generic fiber is supersingular. Let $E$ be a
     supersingular elliptic curve and $\rho _{E}$ be the action of $\Z /2$ on $E$ by $(-1)$-involution.
     $D = \P^{1}$ and $\rho _{D}$ is the action of $\Z/2$ given by
     $x \mapsto -x$ where $x$ is an affine coordinate of $\P ^{1}$.
     Let $X\to \P^{1}$ be the isotrivial
     elliptic fibraiton associated to the $\Z /2$ action $\rho_{E} \times \rho _{D}$ on $E
     \times D$. Then $E_{l}(X)=12$, so $X$ is rational and ordinary, but the
     generic fiber, $E$ is supersingular.
     \end{ex}
     \begin{ex}
     There exists an ordinary fibred surface $\pi : X \to C$ of
     fiber genus$\geq 2$ such that the generic fiber is not
     ordinary. There is a cyclic \'{e}tale covering of curves $D
     \to C$ of order $n$ such that $p \nmid n$, $C$ is ordinary
     and $D$ is not ordinary. (\cite{R},p.76) Let $G=T= \Z/n$. $G$ has the canonical action on $D$.
     Let $E$ be an ordinary
     elliptic curve. $P \in E(k)$ is an $n$-torsion point. $G$ acts on $E$ through the translation by $P$.
     Let $Y = (D \times E)/G$. $Y \to C$
     is an isotrivial elliptic fibration with $R=0$. Since $C$ and $E$ are ordinary,
     $Y$ is ordinary by Theorem 4.1.(a). But for the fibration $\pi : Y
      \to E/G$, every fiber of $\pi$ is isomorphic to
     $D$, so not ordinary.
     \end{ex}
     Now assume the generic fiber $E$ of an isotrivial elliptic
     fibration $\pi$ is ordinary. Let us recall the Frobenius
     diagram of $\pi$ (\ref{Frodia})
     $$\begin{diagram}
     X & \rTo ^{F_{X/C}} & X^{(p)} & \rTo ^{W} & X\\
     & \rdTo <{\pi} & \dTo <{\pi ^{(p)}}  & & \dTo <{\pi}\\
     & & C & \rTo ^{F _{C}} & C
     \end{diagram}
     $$
     and an exact sequence of $\mathcal{O}_{X^{(p)}}$-modules
     $$0 \to \mathcal{O}_{X^{(p)}} \to F_{X/C*} \mathcal{O}_{X} \to B \to 0.$$
     Since $\pi$ is generically ordinary, from (\ref{exac}) we obtain
     $$
     0 \to F_{C}^{*} \pi _{*}\mathcal{O}_{X} \overset
     {F_{X/C}^{*}}{\to} \pi _{*} \mathcal{O}_{X} \to R^{1} \pi^{(p)} _{*} B \to 0.
     $$
     Let $H=R^{1} \pi ^{(p)}_{*} B$. $H$ is a positive divisor concentrated on
     the image of singular fibers. Assume $d = -\deg R^{1} \pi_{*}
     \mathcal{O}_{X}$. The degree of $H$ is $d(p-1)$.
     \begin{defn}
     We say the divisor $H$, the Hasse divisor of $\pi$ and
     the multiplicity of $H$ at $x \in H$, the Hasse
     multiplicity of $\pi$ at $x$.
     \end{defn}
     \begin{prop}
     Let $\pi : X \to C$ be a minimal generically ordinary isotrivial elliptic
     fibration. For $x \in C$, $X_{x}$ is the fiber of $\pi$ over
     $x$. Then the Hasse multiplicity of $\pi$ at $x$ is
     $\frac{1}{12}(p-1) E_{l}(X_{x})$.
     \end{prop}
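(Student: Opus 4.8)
The plan is to read the Hasse multiplicity off the local structure of the map
$$F_{X/C}^{*}\colon F_{C}^{*}N\longrightarrow N,\qquad N:=R^{1}\pi_{*}\mathcal{O}_{X},$$
whose cokernel is the Hasse divisor $H=R^{1}\pi^{(p)}_{*}B$. Globally we already know $\deg H=(p-1)d$ with $d=-\deg N=\chi(\mathcal{O}_{X})=\tfrac{1}{12}E_{l}(X)=\sum_{y}\tfrac{1}{12}E_{l}(X_{y})$, so only a local computation at $x$ remains. Since $|G|$ is prime to $p$, both $N=R^{1}r_{*}\mathcal{O}_{Y}$ (the singularities of $Y$ are rational) and the relative Frobenius commute with the quotient $E\times D\to Y$. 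Hence near $x$ everything is governed by the inertia $\Stab_{x}\cong\Z/m$ acting on $E\times\Delta$, $\Delta=\Spec k[[s]]$, by an automorphism of $E$ and by $s\mapsto\zeta_{m}^{c}s$, with $t=s^{m}$ a coordinate on $C$.

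First I would make $N$ explicit. On the constant family $E\times\Delta\to\Delta$ one has $R^{1}=H^{1}(\mathcal{O}_{E})\otimes k[[s]]$; taking $\Z/m$-invariants yields the free rank-one module $N_{\mathrm{loc}}=k[[t]]\cdot(\epsilon\otimes s^{j_{0}})$, where $\epsilon$ generates $H^{1}(\mathcal{O}_{E})$ and the weight $j_{0}\in\{0,\dots,m-1\}$ is fixed by the $\Z/m$-eigenvalues on $\epsilon$ and on $s$. If the inertia lies in $T$ it acts trivially on $H^{1}(\mathcal{O}_{E})$, so $j_{0}=0$; if it lies in $R$ one finds $j_{0}\in\{1,\dots,m-1\}$. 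A comparison with Section~3 shows $\tfrac{j_{0}}{m}=\tfrac{1}{12}E_{l}(X_{x})$ in every case: the local contributions to $-\deg N$ listed in Proposition~3.5 are exactly the numbers $j_{0}/m$, and these match the fibre Euler numbers (for instance $m=3$ gives $j_{0}=1,2$ for the types $IV,IV^{*}$ with $E_{l}=4,8$, while the multiple fibres with inertia in $T$ have $j_{0}=0$ and $E_{l}=0$).

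The crux is the local shape of $F_{X/C}^{*}$. On $E\times\Delta\to\Delta$ the relative Frobenius on $R^{1}$ is multiplication by the Hasse invariant $a_{E}\in H^{1}(\mathcal{O}_{E})^{\otimes(1-p)}$, a unit because $E$ is ordinary. For rotations (inertia in $R$) ordinarity of $E$ forces $m\mid(p-1)$ (automatic when $m=2$), so $\Z/m$ fixes $a_{E}$ (it acts by $\zeta_{m}^{1-p}=1$) and commutes with $F_{\Delta}$; for translations the statement is immediate since $j_{0}=0$. Descending to $C$, I would compare $F_{C}^{*}N_{\mathrm{loc}}$ with $\bigl(F_{\Delta}^{*}(H^{1}(\mathcal{O}_{E})\otimes k[[s]])\bigr)^{\Z/m}$: the natural map sends the generator $(\epsilon\otimes s^{j_{0}})\otimes 1$ to $(\epsilon\otimes 1)\otimes s^{pj_{0}}=t^{(p-1)j_{0}/m}\,(\epsilon\otimes 1)\otimes s^{j_{0}}$, while the invariant Hasse map carries $(\epsilon\otimes 1)\otimes s^{j_{0}}$ isomorphically onto the generator of $N_{\mathrm{loc}}$. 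Thus $F_{X/C}^{*}$ is, in these trivialisations, multiplication by $a_{E}\,t^{(p-1)j_{0}/m}$, and the length of its cokernel at $x$ is $(p-1)j_{0}/m=\tfrac{p-1}{12}E_{l}(X_{x})$.

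The main obstacle is precisely this descent: one must verify that the relative Frobenius of $X\to C$ is the $\Z/m$-invariant part of the relative Frobenius of $E\times\Delta\to\Delta$, and that the gap between $F_{C}^{*}(-)^{\Z/m}$ and $(F_{\Delta}^{*}-)^{\Z/m}$ is exactly the factor $t^{(p-1)j_{0}/m}$. Both the rationality of the singularities of $Y$ (so that cohomology is computed by invariants and $N=R^{1}r_{*}\mathcal{O}_{Y}$) and the congruence $m\mid(p-1)$ (which makes the exponent an integer and $a_{E}$ equivariant at once) are essential here; the remaining steps are the bookkeeping of $j_{0}$ against the fibre table of Section~3.
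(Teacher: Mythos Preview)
Your argument is correct and takes a genuinely different route from the paper's.

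The paper never computes locally. Instead it first observes that the Hasse multiplicity at $x$ depends only on the isomorphism type of the formal fibre $X\otimes_{\mathcal{O}_{C}}\hat{\mathcal{O}}_{C,x}$, hence only on the Kodaira type of $X_{x}$. It then manufactures, for each fibre type, a specific global isotrivial fibration over $\P^{1}$ with very few singular fibres (e.g.\ $D=\P^{1}$ with $R=\Z/2$ acting by $x\mapsto -x$, giving exactly two $I_{0}^{*}$ fibres) and reads off the individual multiplicities from the known total $\deg H=(p-1)(-\deg N)=(p-1)E_{l}(X)/12$ together with symmetry or previously determined types. The argument is a short bootstrap through the list $I_{0}^{*},III,III^{*},II,II^{*},IV,IV^{*}$.

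Your approach, by contrast, is a direct local calculation: you identify $N$ near $x$ with the $\Z/m$-invariants of $H^{1}(\mathcal{O}_{E})\otimes k[[s]]$, pin down the generator $\epsilon\otimes s^{j_{0}}$, and then exhibit $F_{X/C}^{*}$ as multiplication by $a_{E}\,t^{(p-1)j_{0}/m}$ by factoring through $(F_{\Delta}^{*}M)^{\Z/m}$. The identity $j_{0}/m=E_{l}(X_{x})/12$ you invoke is exactly the term-by-term comparison of $-\deg L_{n-1}$ with $E_{l}(X)$ in Proposition~3.5, so that step is already in the paper. The compatibilities you flag (Frobenius commuting with the quotient, $m\mid p-1$ from ordinarity of $E$ when the inertia is a rotation) are the right ones and hold for the reasons you give.

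What each approach buys: the paper's method is shorter and needs no explicit local model, only the global degree formula and the fibre classification; yours is more intrinsic, explains \emph{why} the exponent is $(p-1)j_{0}/m$ (it is precisely the defect between $F_{C}^{*}$ of invariants and invariants of $F_{\Delta}^{*}$), and would adapt to other isotrivial fibrations where no convenient global test examples are at hand.
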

     \begin{proof}
     Assume $X_{x}$ is not a multiple fiber. The Hasse
     multiplicity at $x$ depends only on the type of $X_{x}$.
     Indeed, $X \otimes _{\mathcal{O}_{C}} \hat{\mathcal{O}}_{C,x}$
     is the relatively minimal desingularization of the quotient of $E \times _{k}
     \hat{\mathcal{O}}_{D,y}$ by $\Stab _{y}(G)$ where $y$ is a ramification point lying over $x$.
     It depends only on the type of the fiber $X_{x}$. Since the
     relative Frobenius morphism is compatible with the base
     change, the Hasse multiplicity is also determined by $X
     \otimes _{\mathcal{O}_{C}} \hat{\mathcal{O}}_{C,x}$. Now
     consider the case $D =\P ^{1}$, $G=R=\Z/2$ and $G$ action on $D$
     is given by $x
     \mapsto -x$, where $x$ is an affine coordinate of $\P ^{1}$.
     Let $X$ be the minimal isotrivial elliptic fibration associated to
     $(E \times D)/G$. Then $E_{l}(X)=12$ and the fibration has two fibers of type
     $I_{0}^{*}$.
     The degree of $H$ is $p-1$, so the Hasse
     multiplicity at the image of a fiber of type $I_{0}^{*}$ is
     $\frac{1}{2} (p-1)$ which is $\frac{1}{12}(p-1)$ times the Euler
     characteristic of the fiber. Assume $j(E)=1728$. $G=R =\Z/4$
     acts on $E$ via an isomorphism $\epsilon : \Z /4 \simeq \Aut E$. The
     minimal fibration $\pi : X \to \P ^{1}$ associated to $(E \times E)/G$ has 1 fiber of
     type $I_{0}^{*}$ and 2 fibers of type $III$. $E_{l}(X)=12$ and $\deg H
     =(p-1)$. Hence the Hasse multiplicity at the image of a fiber of
     type $III$ is $\frac{1}{4}(p-1)$. Now assume $E'=E$ and $G=\Z/4$ acts
     on $E'$ through the other isomorphism $-\epsilon : \Z/4 \to \Aut
     E$. The fibration $\pi ' :X' \to \P ^{1}$ associated to $(E \times E')/G$ has 1 fiber
     of type $I_{0}^{*}$ and two fibers of type $III^{*}$ and $E_{l}(X)=24$.
     Hence
     $\deg H =2(p-1)$ and the Hasse multiplicity at the image of
     the fiber of type $III ^{*}$ is $\frac{3}{4}(p-1)$. When
     $j(E)=0$, there are two actions of $\Z/3$ on $E$ and two
     actions of $\Z /6$ on $E$. It follows by a similar arguments to the above,
     that the Hasse invariants at the image of fiber of type $II$, $II
     ^{*}$, $IV$ and $IV ^{*}$ are $\frac{1}{6}(p-1)$,
     $\frac{5}{6}(p-1)$, $\frac{1}{3}(p-1)$ and $\frac{2}{3}(p-1)$
     respectively.
     \end{proof}
     \begin{rem}
     Let $h$ be the Hasse multiplicity of $\pi$ at $x \in C$. Let
     $\check{X} = X \otimes _{\mathcal{O}_{C}} \mathcal{O}_{C,x}$
     and $\check{\pi} : \check{X} \to \Spec \mathcal{O}_{C,x}$.
     Then $R\check{\pi}_{*} (B|\check{X})$ is quasi
     isomorphic to
     $$0 \to 0 \to \mathcal{O}_{C,x}/p^{h} \to 0,$$
     here the non-zero
     term occurs only at the 1st level.
     Therefore,
     $$RH (B|
     X_{x})=R \check{\pi}_{*} (B| \check{X})
     \overset{\mathbb{L}}{\otimes} (\mathcal{O}_{X,x}/m_{x})$$
     is quasi isomorphic to
     $$0 \to \mathcal{O}_{X,x}/m_{x}^{h}
     \overset{\times \pi _{x}}{\to} \mathcal{O}_{X,x}/m_{x}^{h} \to
     0,$$
     here $\pi _{x}$ is a uniformizer of $\mathcal{O}_{X,x}$.
     Hence
     $H^{0}(B|X_{x}) = H^{1}(B|X_{x}) =
     k \textrm{ if } h>0$ and both are 0 if h=0. Therefore the cohomologies of $B|
     X_{x}$ does not contain information about the Hasse
     multiplicity, but only the ``ordinarity'' of the fiber.
      Note that $B|X_{x}$ is the cokernel of the Frobenius morphism
     of the structure sheaf of $X_{x}$, so it depend only on the
     fiber $X_{x}$.
     The above proposition shows that for an isotrivial elliptic
     fibration with a ordinary generic fiber, the Hasse multiplicity
     of a singular fiber is just a multiple of the Euler
     characteristic of the fiber by a constant determined by the
     characteristic of the base field. However for a
     non-isotrivial elliptic fibration, this problem is subtle
     because of the existence of smooth supersingular fibers.
     \end{rem}
     \begin{rem}
     The Frobenius morphism on $H^{2}(\mathcal{O}_{X})$ is
     determined by $R^{1} \pi _{*}\mathcal{O}_{X}$ and the Hasse
     divisor. Recall from the Frobenius diagram (\ref{Frodia}), $F_{X} = W
     \circ F_{X/C}$ and $H^{2}(\mathcal{O}_{X}) = H^{1}(R^{1} \pi
     _{*} \mathcal{O}_{X})$. $W ^{*} : H^{2}(\mathcal{O}_{X}) \to
     H^{2} (\mathcal{O} _{X^{(p)}})$ is equal to the $H^{1}$ of
     the morphism
     $$F_{C}^{*} : R^{1}\pi _{*} \mathcal{O}_{X} \to
     F_{C*}\mathcal{O}_{C} \otimes R^{1}\pi _{*} \mathcal{O}_{X}$$
     which depends only on $R^{1} \pi _{*} \mathcal{O}_{X}$. And
     $F_{X/C}^{*} : H^{2}(\mathcal{O}_{X^{(p)}}) \to
     H^{2}(\mathcal{O}_{X})$ is equal to the $H^{1}$ of
     $$F_{X/C}^{*} :F_{C}
     ^{*} R^{1}\pi _{*}\mathcal{O}_{X} \overset{D}{\to}
     R^{1}\pi_{*}\mathcal{O}_{X}.$$ When $C =\P ^{1}$, $R
     =\Z/2$ and $D'$ is a hyperelliptic curve, these morphisms can
     be expressed precisely. Let $[x,y]$ be a projective coordinate
     of $\P ^{1}$.
     Assume $R^{1} \pi _{*} \mathcal{O}_{X} = \mathcal{O}(-d)$.
     $$\frac{1}{x^{d-1}y}, \frac{1}{x^{d-2}y^{2}} , \cdots ,
     \frac{1}{xy^{d-1}}$$
     forms a basis of $H^{1}(\mathcal{O}(-d))$. The image of $W^{*}
     : H^{1}(\mathcal{O}(-d)) \to H^{1}(\mathcal{O}(-pd))$ is
     generated by
     $$\frac{1}{x^{p(d-1)}y^{p}}, \frac{1}{x^{p(d-2)}y^{2p}} , \cdots ,
     \frac{1}{x^{p}y^{p(d-1)}}.$$
     We may regard the Hasse divisor $D$ as a homogeneous
     polynomial in $x,y$ of degree $(p-1)d$, say $k[x,y]=\sum _{i=0}^{(p-1)d} a_{i}x^{i}y^{(p-1)d-i}$. The image of
     $\frac{1}{x^{j}y^{k}}$ by\\
     $F_{X/C}^{*} :
     H^{1}(\mathcal{O}(-pd)) \to H^{1}(\mathcal{O}(-d))$ is
     $$  a_{j-1}\frac{1}{xy^{d-1}} + \cdots +
     a_{j-d+1}\frac{1}{x^{d-1}y},$$
     the sum of the $\frac{1}{x^{d-1}y}, \frac{1}{x^{d-2}y^{2}}, \cdots ,
     \frac{1}{xy^{d-1}}$ terms of $\frac{k[x,y]}{x^{j}y{k}} = \sum
     _{i=0}^{(p-1)d} a_{i} \frac{x^{i}y^{(p-1)d-i}}{x^{j}y^{k}}$.
     If we arrange the coefficients of $\frac{1}{x^{d-1}y},\cdots ,
     \frac{1}{xy^{d-1}}$ of $F_{X/C}^{*}(\frac{1}{x^{jp}y^{kp}})$ properly, it makes the Hasse
     matrix of the hyperelliptic curve $D'$. Hence $X$ is ordinary
     if and only if $D'$ is ordinary. This recovers Theorem 4.1.(b)
     for this case.
     \end{rem}
     \begin{thm} Let $\pi : X \to C$ be an isotrivial elliptic
     fibration associated to an action of a finite group $G = T \rtimes R$ on $E
     \times D$ defined over a number field $F$ and $D' =
     D/T$.\\[0.15cm]
     (a) If $R$ is trivial, the ordinary reduction theorem holds
     for $X$ if and only if it holds for $C$.\\[0.15cm]
     (b) If $R$ is non-trivial, the ordinary reduction theorem holds for
     $X$ if it holds for $D'$.
     \end{thm}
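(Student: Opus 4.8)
The plan is to reduce the ordinarity of each good reduction $X_\upsilon$ to conditions on the reductions $E_\upsilon$, $C_\upsilon$ and $D'_\upsilon$ via Theorem 4.1 and its proof, and then to feed in Serre's ordinary reduction theorem for the elliptic curve $E$ (Theorem 1.2) together with the hypothesised ordinary reduction theorem for $C$ (part (a)) or for $D'$ (part (b)). First I would discard the finitely many places of bad reduction and the places dividing $|G|$, and note that the statement is vacuous whenever $X$ (or $X'$) is rational, since a rational surface is automatically ordinary; so I may invoke the non-rational hypothesis of Theorem 4.1 freely.

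For (a), $R$ is trivial, so Theorem 4.1(a) gives that $X_\upsilon$ is ordinary if and only if both $E_\upsilon$ and $C_\upsilon$ are ordinary. Hence the ordinary locus of $X$ is the intersection of the ordinary loci of $E$ and of $C$, and in particular is contained in the ordinary locus of $C$; the forward implication follows, as a positive-density (resp.\ density-one) set of ordinary places of $X$ forces the same for $C$. For the converse I would use that, $E$ being an elliptic curve over $F$, its ordinary places already have positive density by Theorem 1.2. Passing to a finite extension $F'/F$ over which the ordinary places of $C$ have density one, their intersection with the positive-density ordinary locus of $E$ still has positive density, because the complement of a density-one set is negligible; this is exactly the ordinary reduction theorem for $X$.

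For (b), $R$ is non-trivial, so $j(E)\in\{0,1728\}$ and $E$ has complex multiplication; $E_\upsilon$ is ordinary precisely when $p\equiv 1\ (\mathrm{mod}\ 3)$ if $j(E)=0$, resp.\ $p\equiv 1\ (\mathrm{mod}\ 4)$ if $j(E)=1728$, which is exactly the condition that the root of unity $\xi_n$ lie in the residue field. The key point, extracted from the proof of Theorem 4.1(b)--(e), is that on this locus the character decomposition $g_{*}\mathcal{O}_{D'}=\bigoplus_{i=0}^{n-1}L_i$ is stable under Frobenius, that $R^1\pi_{*}\mathcal{O}_{X}=L_{n-1}$, and that
\[
X_\upsilon \text{ ordinary} \iff E_\upsilon,\ C_\upsilon \text{ ordinary and } F^{*}\text{ bijective on } H^1(L_{n-1}).
\]
Since $D'_\upsilon$ is ordinary if and only if $F^{*}$ is bijective on every summand $H^1(L_i)$, in particular on $L_0=\mathcal{O}_C$ and on $L_{n-1}$, I conclude that $E_\upsilon$ and $D'_\upsilon$ being simultaneously ordinary forces $X_\upsilon$ ordinary.

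Granting the ordinary reduction theorem for $D'$, I would then pass to a finite extension $F'/F$ over which the ordinary places of $D'$ have density one. The CM curve $E$ is ordinary on a set of places of density $1/2$ (the congruence condition above), and intersecting this with the density-one ordinary locus of $D'$ leaves a set of density $1/2>0$ on which, by the previous paragraph, $X_\upsilon$ is ordinary; this yields the ordinary reduction theorem for $X$. The hard part is precisely this density bookkeeping intertwined with the CM behaviour of $E$: because $E$ is forced to be supersingular on a density-$1/2$ set of residue characteristics, one cannot expect density one for $X$ when $R$ is non-trivial, which is why (b) is stated as a one-sided implication for the positive-density form of the theorem. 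The remaining routine checks are the Frobenius-stability of the character decomposition and the non-rationality hypothesis, both of which hold automatically on the locus where $E_\upsilon$ is ordinary.
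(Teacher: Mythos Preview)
Your overall strategy---reduce the ordinarity of $X_\upsilon$ to that of $E_\upsilon$ and $D'_\upsilon$ (or $C_\upsilon$) via Theorem~4.1, then feed in Serre's theorem for $E$ and the hypothesis---is exactly the paper's. Two points, however, need correction.

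First, the claim ``$R$ non-trivial, so $j(E)\in\{0,1728\}$'' is false when $R=\Z/2$: then $R\hookrightarrow\Aut E$ imposes no constraint on $j(E)$, and $E$ need not have complex multiplication. Your subsequent reasoning about Frobenius-stability of the character decomposition, which you attribute to CM, still happens to hold for $R=\Z/2$ simply because $p$ is odd, so your equivalence $X_\upsilon$ ordinary $\Leftrightarrow$ $E_\upsilon,C_\upsilon$ ordinary and $F^{*}|H^{1}(L_{n-1})$ bijective survives. But the cleaner route is just to cite Theorem~4.1(b)--(e) directly: in each case, $E_\upsilon$ and $D'_\upsilon$ ordinary together imply $X_\upsilon$ ordinary (for (d) and (e), note that $D'_\upsilon$ ordinary forces the intermediate quotients $D''_\upsilon,D'''_\upsilon$ ordinary, whence the $p$-rank conditions are automatic).

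Second, and more seriously, your final paragraph is mistaken: you assert that one \emph{cannot} expect density one for $X$ because $E$ is supersingular on a density-$1/2$ set of residue characteristics. But Theorem~1.2 applies to CM elliptic curves as well; over an extension containing the CM field, the inert (supersingular) primes have residue degree~$2$ and hence Dirichlet density~$0$, so the ordinary places of $E$ do reach density~$1$ after a finite extension. The paper's proof exploits precisely this: pass to a common finite extension $F'''$ over which both $E$ and $D'$ (resp.\ $C$) have density-one ordinary loci, and intersect to get density one for $X$. Your argument stops at positive density, which is only half of the ordinary reduction theorem as stated in Theorem~1.2.
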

     \begin{proof}
     Since the ordinary reduction theorem holds for $E$, there are
     a number field $F' \supset F$ and $\mathcal{M}_{F'}$, a set of places
     of $F'$ of density 1, such that for each $\upsilon \in \mathcal{M}_{F'}$,
     the reductions of $E \otimes F'$ is
     ordinary. If $C$ satisfies the ordinary reduction theorem,
     there are also a number field $F'' \supset F$ and
     $\mathcal{M}_{F''}$,
     a set of places of $F''$ of density 1, such that for each $\upsilon \in \mathcal{M}_{F''}$,
     the reduction of $C \otimes F''$
     is ordinary. Hence for a number field $F''' \supset F' \cup
     F''$, the set of places of $F'''$, $\mathcal{M}_{F''} \cap \mathcal{M}_{F'''}$
     has density 1 and at a place in $\mathcal{M}_{F''} \cap \mathcal{M}_{F'''}$,
     the reductions of both of $C$ and $E$ are ordinary. By
     theorem 4.1, (a) is valid. The same argument can be applied
     to (b)
     \end{proof}
     \begin{cor}
     Let $\pi : X \to C$ be an isotrivial elliptic fibration defined
     over a number field $F$. Assume each non-multiple singular
     fiber of $\pi$ is of type $I_{0}^{*}$.\\[0.15cm]
     (a) If $\gen C=0$ and $\chi(\mathcal{O}_{X})\leq 3$, then the
     ordinary reduction theorem holds for $X$.\\[0.15cm]
     (b) If $\gen C =1$ and $\chi(\mathcal{O}_{X}) \leq 1$, then
     the ordinary reduction theorem holds for $X$.\\[0.15cm]
     (c) If $\gen C =2$ and $R^{1} \pi _{*} \mathcal{O}_{X} =
     \mathcal{O}_{C}$, then the ordinary reduction theorem holds
     for $X$.
     \end{cor}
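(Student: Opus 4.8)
The plan is to reduce each case to the ordinary reduction theorem for an abelian variety of dimension at most two, which is available by Serre (for elliptic curves, Theorem 1.2) and by Ogus (for abelian surfaces, Section 1). The first step is to use the hypothesis on the singular fibers to pin down the rotation group $R$. Scanning the fiber list (i)--(v) of Section 3, a stabilizer $\Stab_{x}(G)\subset T$ produces only a multiple of a smooth elliptic curve, a $\Z/3$, $\Z/4$ or $\Z/6$ stabilizer inside $R$ produces fibers of type $IV,IV^{*},III,III^{*},II,II^{*}$, and a fiber of type $I_{0}^{*}$ arises exactly from a stabilizer $\Z/2\subset R$. Hence the assumption that every non-multiple singular fiber is of type $I_{0}^{*}$ forces $R$ to be trivial or $R=\Z/2$.

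If $R$ is trivial, Theorem 4.9(a) reduces the ordinary reduction theorem for $X$ to the one for $C$, and I would close by the genus of $C$: in (a) $C=\P^{1}$ and the statement is trivial, while in (c) $C$ has genus $2$, so $\Jac C$ is an abelian surface and Ogus applies. Case (c) in fact lands in this trivial-$R$ situation: by Proposition 3.5(b) the case $R=\Z/2$ gives $R^{1}\pi_{*}\mathcal{O}_{X}=L_{1}$ with $\deg L_{1}=-a_{2}/2$, and for the connected cover $D'$ a ramified $\Z/2$ cover yields a line bundle of negative degree while an unramified one yields a nontrivial $2$-torsion bundle; neither equals $\mathcal{O}_{C}$. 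Thus the hypothesis $R^{1}\pi_{*}\mathcal{O}_{X}=\mathcal{O}_{C}$ singles out the trivial $R$, and (c) follows.

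The substantive case is $R=\Z/2$, occurring in (a) and (b). Here Theorem 4.9(b) tells me the ordinary reduction theorem holds for $X$ once it holds for $D'$, so it suffices to bound $\gen D'$. The cover $D'\to C$ is a double cover branched exactly at the $a_{2}$ ramification points of index $2$, and Proposition 3.5(b) gives $a_{2}=2\chi(\mathcal{O}_{X})$. Riemann--Hurwitz then yields
\[
\gen D' = 2\gen C - 1 + \tfrac{1}{2}a_{2} = 2\gen C - 1 + \chi(\mathcal{O}_{X}).
\]
In (a), $\gen C=0$ with $\chi(\mathcal{O}_{X})\leq 3$ gives $\gen D'\leq 2$; in (b), $\gen C=1$ with $\chi(\mathcal{O}_{X})\leq 1$ gives $\gen D'\leq 2$. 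In both cases $\Jac D'$ has dimension at most two, so the ordinary reduction theorem holds for the curve $D'$ by Serre and Ogus, hence for $X$.

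I expect no genuine obstacle here, only careful bookkeeping: the numerical bounds on $\chi(\mathcal{O}_{X})$ are exactly what Riemann--Hurwitz needs to keep $\gen D'\leq 2$, and the identification $R^{1}\pi_{*}\mathcal{O}_{X}=\mathcal{O}_{C}$ in (c) is precisely the condition that rules out the higher-genus $D'$ a genuine $\Z/2$ cover would produce. Granting these, every case is reduced to the abelian-surface reduction theorem of Ogus.
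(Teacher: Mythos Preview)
Your proof is correct and takes essentially the same approach as the paper, filling in the Riemann--Hurwitz computation $\gen D'=2\gen C-1+\chi(\mathcal{O}_X)$ and the case-by-case bookkeeping that the paper's two-line proof leaves implicit. One step to tighten: from the fiber list you correctly deduce that every nontrivial stabilizer lying in $R$ is $\Z/2$, but the jump to $R\in\{1,\Z/2\}$ needs one more word---over $C=\P^1$ in (a) the inertia subgroups generate the cyclic group $R$ by connectedness of $D'$, while in (b) and (c) the paper's opening remark that $j(E)\neq 0,1728$ makes the hypothesis hold is really signalling the intended context $R\subset\Aut E=\Z/2$, which gives the conclusion directly.
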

     \begin{proof}
     Note that if $j(E) \neq 0,1728$, the assumption is valid.
     Since a proper smooth curve of genus $\leq 2$ satisfies the
     ordinary reduction theorem, by the above theorem, the claim
     follows.
     \end{proof}

     \end{document}